\newcommand{\bb}{\mathbf{B}}
\newcommand{\bd}{\mathbf{D}}
\newcommand{\der}{\delta}
\newcommand{\1}{{\bf 1}}
\newcommand{\lp}{\left(}
\newcommand{\rp}{\right)}
\newcommand{\lc}{\left[}
\newcommand{\rc}{\right]}
\newcommand{\lcl}{\left\{}
\newcommand{\rcl}{\right\}}
\newcommand{\lln}{\left|}
\newcommand{\rrn}{\right|}
\newcommand{\ga}{\gamma}
\newcommand{\om}{\omega}
\newcommand{\oom}{\Omega}
\newcommand{\beq}{\begin{equation}}
\newcommand{\eeq}{\end{equation}}
\newcommand{\bea}{\begin{eqnarray}}
\newcommand{\eea}{\end{eqnarray}}
\newcommand{\beas}{\begin{eqnarray*}}
\newcommand{\eeas}{\end{eqnarray*}}
\def\msh{{\mathscr H}}
\def\cC{{\mathcal C}}
\def\me{{\mathbb  E}}
\def\mr{{\mathbb  R}}
\newcommand{\cb}{{\mathcal B}}
\newcommand{\cac}{{\mathcal C}}
\newcommand{\ce}{{\mathcal E}}
\newcommand{\cf}{{\mathcal F}}
\newcommand{\ch}{{\mathcal H}}
\newcommand{\cn}{{\mathcal N}}
\newcommand{\cp}{{\mathcal P}}
\newcommand{\crr}{{\mathcal R}}
\newcommand{\IP}{{\mathbb P}}
\newcommand{\IE}{{\mathbb E}}
\newcommand{\IR}{{\mathbb R}}
\newcommand{\D}{{\mathbb D}}
\newcommand{\EE}{{\mathbb E}}
\newcommand{\PP}{{\mathbb P}}
\newcommand{\R}{{\mathbb R}}
\newtheorem{theorem}{Theorem}[section]
\newtheorem{definition}[theorem]{Definition}
\newtheorem{lemma}[theorem]{Lemma}
\newtheorem{proposition}[theorem]{Proposition}
\newtheorem{assumption}[theorem]{Assumption}
\theoremstyle{remark}
\newtheorem{remark}[theorem]{Remark}
\theoremstyle{remark}
\newtheorem{example}[theorem]{Example}
\theoremstyle{remark}
\newtheorem{foo}[theorem]{Remarks}
\def\msh{{\mathscr H}}
\def\cC{{\mathcal C}}
\def\me{{\mathbb  E}}
\def\mr{{\mathbb  R}}
\numberwithin{equation}{section}
\title[]{fractal  dimensions of rough differential equations driven by fractional Brownian motions}
\author{Shuwen Lou}
\address{Dept. Mathematics, Statistics and Computer Science\\ University of Illinois at Chicago\\ Chicago, IL 60607.}
\email{slou@uic.edu}
\author{Cheng Ouyang}
\address{Dept. Mathematics, Statistics and Computer Science\\ University of Illinois at Chicago\\ Chicago, IL 60607.}
\email{couyang@math.uic.edu}
\subjclass{28D05, 60D58}
\begin{document}

\begin{abstract}
In this work we study fractal properties of rough differential equations driven by a fractional Brownian motions with Hurst parameter $H>\frac{1}{4}$.  In particular, we show that the Hausdorff dimension of the sample paths of the solution is $\min\{d,\frac{1}{H}\}$ and that the Hausdorff dimension of the level set $L_x=\{ t\in[\epsilon,1]: X_t=x\}$ is $1-dH$ with positive probability when $d<\frac{1}{H}$. 

\end{abstract}

\maketitle

\tableofcontents

\section{Introduction}

Random dynamical systems are  well established modeling tools for a variety of natural phenomena ranging from physics (fundamental and phenomenological) to chemistry and more recently to biology, economics, engineering sciences and mathematical finance. In many interesting models the lack of any regularity of the external inputs of the differential equation as functions of time is a technical difficulty that hampers their mathematical analysis. The theory of rough paths has been initially developed by T. Lyons \cite{Ly} in the 1990's to provide a framework to analyze a large class of driven differential equations and the precise relations between the driving signal and the output (that is the state, as function of time, of the controlled system). 

Rough paths theory provides a nice framework to study differential equations driven by Gaussian processes (see \cite{FV1}). In particular, using rough paths theory, we may define solutions of stochastic differential equations driven by a fractional Brownian motion. Let us then consider 
\begin{align}\label{sde-intro}
X_t=x+\int_0^t V_0(X_s)ds+\sum_{i=1}^d \int_0^t V_i(X_s)dB^i_s,
\end{align}
where $x \in \mathbb{R}^n$, $V_0,V_1,\cdots,V_d$ are bounded smooth vector fields on $\mr^n$ and $(B_t)_{t\ge 0}$ is a $d$-dimensional  fractional Brownian motion with Hurst parameter $H\in(\frac{1}{4},1)$. Existence and uniqueness of solutions to the above equation can be found, for example, in \cite{LQ}. In particular, when $H=\frac{1}{2}$, this notion of solution coincides with the solution of the corresponding Stratonovitch stochastic differential equation.  It is also clear now (cf. \cite{BH,CF,CLL,H-P}) that under H\"{o}rmander's condition the law of the solution $X_t$ has a smooth density $p_t(x,y)$ with respect to 
the Lebesgue measure on $\mathbb{R}^n$.

In this paper, we study fractal properties of the sample paths of solution $X$ to equation (\ref{sde-intro}). More specifically, we investigate the Hausdorff dimensions of sample paths $X([0,1])$ and level sets $L_x=\{t\in[0,1], X_t=x\}$ of $X$ on $[0,1]$. Our main result extends the classical result for fractional Brownian motions (see e.g. \cite{Xiao1} and \cite{Xiao}) and is summarized as follows.

\begin{assumption} \label{hyp:elliptic}
There exists a strictly positive constant $\lambda$ such that
\begin{equation*}
 v^{*} V(x) V^{*}(x) v  \geq \lambda \vert v \vert^2, \qquad \text{for all } v,x \in \R^n,
\end{equation*}
where we have set $V=(V_j^i)_{i=1,\ldots ,n; j=1,\ldots d}$. In particular, under this assumption, $n=d$ in equation (\ref{sde-intro}) and $V_1,...,V_d$ are vector fields on $\mr^d$.
\end{assumption}

\begin{theorem}
Let $X$ be the solution to equation (\ref{sde-intro}). We have almost surely
$$\dim_\mathcal{H} X([0,1])=\min\left\{d,\frac{1}{H}\right\}.$$ 
Moreover, for any given $x\in\mr^d$, 
\begin{itemize}
\item[i.] If $dH >1$, $L_x=\varnothing$ a.s.;

\item[ii.] If $dH <1$, $\dim_{\mathcal{H}}L_x=1-dH$
 with positive probability. 
\end{itemize}
\end{theorem}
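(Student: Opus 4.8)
The plan is to establish matching upper and lower bounds for each of the two dimension statements, with the upper bounds coming from soft covering arguments built on the H\"older regularity of $X$, and the lower bounds coming from the potential-theoretic (Frostman) energy method fed by Malliavin-calculus density estimates.

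\textbf{Upper bounds.} First I would record that, by the Kolmogorov-type continuity estimate available for solutions of rough differential equations, $X$ admits a modification that is $\gamma$-H\"older continuous on $[0,1]$ for every $\gamma<H$, with a finite (random) H\"older constant $M$. Since a $\gamma$-H\"older map carries $[0,1]$ into a set of Hausdorff dimension at most $1/\gamma$, letting $\gamma\uparrow H$ gives $\dim_{\mathcal H}X([0,1])\le 1/H$; together with the trivial bound $\dim_{\mathcal H}X([0,1])\le d$ coming from $X([0,1])\subset\mr^d$, this yields the upper bound $\min\{d,1/H\}$. For the level set I would partition $[\epsilon,1]$ into the $\sim 2^n$ dyadic intervals $I$ of length $2^{-n}$; if $I$ meets $L_x$ then H\"older continuity forces its left endpoint $t_I$ to satisfy $X_{t_I}\in B(x,M2^{-n\gamma})$. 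Using the bound $\sup_y p_t(x,y)\le C$, valid uniformly for $t\in[\epsilon,1]$ under Assumption \ref{hyp:elliptic}, one gets $\mp(X_{t_I}\in B(x,r))\le Cr^d$, so the expected number $N_n$ of intervals meeting $L_x$ satisfies $\me[N_n]\le C\,2^{n(1-\gamma d)}$ (after restricting to the high-probability event $\{M\le M_0\}$). If $dH>1$ we choose $\gamma$ with $\gamma d>1$, so $\me[N_n]\to 0$ and Borel--Cantelli gives $L_x=\varnothing$ a.s., proving (i); if $dH<1$, then $\me[\sum_I(2^{-n})^s]\le C\,2^{n(1-\gamma d-s)}\to 0$ for $s>1-\gamma d$, whence $\dim_{\mathcal H}L_x\le 1-dH$ a.s.

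\textbf{Lower bound for the image.} Here I would push forward Lebesgue measure on $[0,1]$ by $X$ to obtain a random measure $\mu$ on $X([0,1])$ and estimate its $s$-energy through
\begin{equation*}
\me\int\!\!\int\frac{\mu(da)\,\mu(db)}{|a-b|^s}=\int_0^1\!\!\int_0^1\me\big[|X_t-X_s|^{-s}\big]\,ds\,dt .
\end{equation*}
The key input is the negative-moment estimate $\me[|X_t-X_s|^{-s}]\le C|t-s|^{-sH}$, valid for $s<d$, which expresses that the increment $X_t-X_s$ is of size $|t-s|^H$ with a nondegenerate density. Since $\int_0^1\!\int_0^1|t-s|^{-sH}\,ds\,dt<\infty$ exactly when $sH<1$, the energy is finite for every $s<\min\{d,1/H\}$, and Frostman's lemma gives $\dim_{\mathcal H}X([0,1])\ge\min\{d,1/H\}$ a.s.

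\textbf{Lower bound for the level set.} For part (ii) I would use the local time of $X$ at $x$ as a natural Frostman measure on $L_x$. Writing $L(x,dt)$ for the occupation measure, its total mass has first moment $\me[L(x,1)]=\int_0^1 p_s(x_0,x)\,ds>0$ and second moment $\me[L(x,1)^2]=\int_0^1\!\int_0^1 p_{s,t}(x,x)\,ds\,dt$, where $p_{s,t}$ is the joint density of $(X_s,X_t)$ and $x_0$ is the initial point. The crucial two-point estimate $p_{s,t}(x,x)\le C|t-s|^{-dH}$ makes the second moment finite when $dH<1$, and the Paley--Zygmund inequality then yields $\mp(L(x,1)>0)>0$; on that event $L(x,\cdot)$ is a nontrivial measure carried by $L_x$. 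Estimating its $\beta$-energy,
\begin{equation*}
\me\int_\epsilon^1\!\!\int_\epsilon^1\frac{L(x,ds)\,L(x,dt)}{|t-s|^\beta}=\int_\epsilon^1\!\!\int_\epsilon^1\frac{p_{s,t}(x,x)}{|t-s|^\beta}\,ds\,dt\le C\int_\epsilon^1\!\!\int_\epsilon^1|t-s|^{-dH-\beta}\,ds\,dt,
\end{equation*}
which is finite for every $\beta<1-dH$, so the energy is a.s. finite and, combined with the positive-probability nontriviality of $L(x,\cdot)$, Frostman's lemma gives $\dim_{\mathcal H}L_x\ge 1-dH$ with positive probability. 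The entire difficulty is concentrated in the Malliavin-calculus density estimates, namely the uniform one-point bound $\sup_y p_t(x,y)\le C$ on $[\epsilon,1]$, the increment negative moment $\me[|X_t-X_s|^{-s}]\le C|t-s|^{-sH}$, and especially the two-point bound $p_{s,t}(x,x)\le C|t-s|^{-dH}$. Establishing these requires controlling the inverse of the joint Malliavin covariance matrix of $(X_s,X_t)$ and showing it degenerates at the sharp rate $|t-s|^{dH}$ as $s\to t$; this is the fractional/rough-path analogue of local nondeterminism, and for $H\in(1/4,1)$ it must be obtained through the rough-path Malliavin framework rather than any Gaussian computation, so I expect the bulk of the technical work to lie precisely in these two-point nondegeneracy estimates.
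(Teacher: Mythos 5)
Your proposal follows essentially the same route as the paper: H\"older-continuity covering arguments for both upper bounds, and the Frostman energy method fed by the Malliavin density estimates (the uniform one-point bound, the negative moment $\me\lc|X_t-X_s|^{-\gamma}\rc\le C|t-s|^{-\gamma H}$ from the increment density, and the two-point bound $p_{s,t}(x,x)\le C|t-s|^{-dH}$ from the bivariate density) for both lower bounds. The only point where you gloss over work the paper does explicitly is the level-set lower bound: you take the local time $L(x,\cdot)$ at the \emph{fixed} point $x$ and its moment identities as given, whereas the paper constructs this measure as a weak limit of the Gaussian-smoothed occupation measures $\mu_n(C)=\int_C(2\pi n)^{d/2}\exp(-n|X_t-x|^2/2)\,dt$ and verifies the uniform first-moment, second-moment and $\gamma$-energy bounds (Kahane/Testard), which is exactly the justification your sketch needs and uses the same three estimates you identify.
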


Let us mention that  some relevant results regarding sample paths properties of $X$  has been studied in \cite{BNOT} under the same uniform ellipticity condition.  In the aforementioned paper the authors have shown that for any bounded Borel set $A\subset \R^d$ 
\begin{equation*}
\mathbb{P}\{X_t\  \mathrm{hits}\  A\ \mathrm{for}\  t\in[a,b]\} \quad \Longleftrightarrow  \quad \text{Cap}_\alpha(A) >0 \ ,
\end{equation*}
where $\text{Cap}_\alpha(A)$ is the $\alpha$-dimensional Newtonian capacity of $A$. In particular, this characterization of hitting probability in terms of capacity implies that $\frac{1}{H}$ is the critical dimension concerning whether or not the process $X$ hits a given point $x$ in $\mr^d$; that is, $X$ does not hit $x$  almost surely if $d>\frac{1}{H}$;  and $X$ hits $x$ with positive probability if $d<\frac{1}{H}$.   The main ingredient in obtaining these results is  upper and lower bound estimates for densities of $X$.   

Based on techniques developed in \cite{BNOT}, we are able to have some further study for density functions of $X$. In particular, we slightly improve the density estimate for the random vector $(X_s, X_t-X_s)$ to an exponential decay (see Theorem \ref{density bivariate} below)
\begin{equation*} 
p_{s,t-s}(\xi_1, \xi_2) \leq  \frac{C}{(t-s)^{dH}} \exp\left\{-\frac{|\xi_2|^{2\gamma}}{C|t-s|^{2\gamma^2}} \right\},
\end{equation*}
for $\gamma<H$.  Let us point out  that  we will prove in Theorem \ref{density increment} below that the density $p_{t-s}(\xi)$ of $X_t-X_s$ has a faster decay rate in $\xi$,
\begin{equation*} 
p_{t-s}(\xi) \leq  \frac{C}{(t-s)^{dH}} \exp\left\{-\lambda\frac{|\xi|^{(2H+1)\wedge 2}}{C(t-s)^{2H}} \right\}.
\end{equation*}
This is due to the fact that we need to perform integration by parts twice in order to obtain the density of $(X_s, X_t-X_s)$. For this purpose we have to sacrifice  the decay rate in order to have an extra order of smoothness in the Malliavin sense.

The organization of the paper is as follows. In Section 2, we give necessary preliminaries on rough paths and Malliavin calculus.  In Section 3, we show several tail and density estimates for $X$ that will be needed later in order to obtain our main result. Section 4 is then devoted to the  proof of our main result.

\section{preliminary mateiral}
For some fixed $H>\frac{1}{4}$, we consider $(\oom,\cf,\PP)$ the canonical probability space associated with the fractional
Brownian motion (in short fBm) with Hurst parameter $H$. That is,  $\oom=\cac_0([0,1])$ is the Banach space of continuous functions
vanishing at zero equipped with the supremum norm, $\cf$ is the Borel sigma-algebra and $\PP$ is the unique probability
measure on $\oom$ such that the canonical process $B=\{B_t=(B^1_t,\ldots,B^d_t), \; t\in [0,1]\}$ is a fractional Brownian motion with Hurst
parameter $H$.
In this context, let us recall that $B$ is a $d$-dimensional centered Gaussian process, whose covariance structure is induced by
\begin{align}\label{covariance}
R\left( t,s\right) :=\EE  B_s^j \, B_t^j
=\frac{1}{2}\left( s^{2H}+t^{2H}-|t-s|^{2H}\right),
\quad
s,t\in[0,1] \mbox{ and } j=1,\ldots,d.
\end{align}
In particular it can be shown, by a standard application of Kolmogorov's criterion, that $B$ admits a continuous version
whose paths are $\ga$-H\"older continuous for any $\ga<H$.

\subsection{Rough paths above fractional Brownian motions} In this section, we recall some basic results in rough paths theory and how a fractional Brownian motion is lifted to be a rough path. More details can be found in \cite{FV-bk} and \cite{LQ}.
For $N\in\mathbb{N}$, recall that the truncated algebra $T^{N}(\mathbb{R}%
^{d})$ is defined by
$$
T^{N}(\mathbb{R}^{d})=\bigoplus_{m=0}^{N}(\mathbb{R}%
^{d})^{\otimes m},
$$
with the convention $(\mathbb{R}^{d})^{\otimes
0}=\mathbb{R}$. The set $T^{N}(\mathbb{R}^{d})$ is equipped with a straightforward
vector space structure plus an multiplication $\otimes$. 
%\[
%\pi_{m}(g\otimes h)=\sum_{k=0}^{N}\pi_{m-k}(g)\otimes\pi_{k}(h),\qquad g,h\in
%T^{N}(\mathbb{R}^{d}),
%\]
Let $\pi_{m}$ be the projection on the $m$-th tensor level. Then
$(T^{N}(\mathbb{R}^{d}),+,\otimes)$ is an associative algebra with unit
element $\mathbf{1} \in(\mathbb{R}^{d})^{\otimes0}$.

\smallskip

For $s<t$ and $m\geq2$, consider the simplex $\Delta_{st}^{m}=\{(u_{1}%
,\ldots,u_{m})\in\lbrack s,t]^{m};\,u_{1}<\cdots<u_{m}\} $, while the
simplices over $[0,1]$ will be denoted by $\Delta^{m}$. A continuous map
$\mathbf{x}:\Delta^{2}\rightarrow T^{N}(\mathbb{R}^{d})$ is called a
multiplicative functional if for $s<u<t$ one has $\mathbf{x}_{s,t}%
=\mathbf{x}_{s,u}\otimes\mathbf{x}_{u,t}$. An important example arises from
considering paths $x$ with finite variation: for $0<s<t$ we set
\begin{equation*}
\mathbf{x}_{s,t}^{m}=\sum_{1\leq i_{1},\ldots,i_{m}\leq d}\biggl( \int%
_{\Delta_{st}^{m}}dx^{i_{1}}\cdots dx^{i_{m}}\biggr) \,e_{i_{1}}\otimes
\cdots\otimes e_{i_{m}}, 
\end{equation*}
where $\{e_{1},\ldots,e_{d}\}$ denotes the canonical basis of $\mathbb{R}^{d}%
$, and then define the truncated {signature} of $x$ as
\[
S_{N}(x):\Delta^{2}\rightarrow T^{N}(\mathbb{R}^{d}),\qquad(s,t)\mapsto
S_{N}(x)_{s,t}:=1+\sum_{m=1}^{N}\mathbf{x}_{s,t}^{m}.
\]
The function $S_{N}(x)$ for a smooth function $x$ will be our typical example of multiplicative functional. Let us stress the fact that those elements take values in the strict subset
$G^{N}(\mathbb{R}^{d})\subset T^{N}(\mathbb{R}^{d})$,  called free nilpotent group of step $N$, and is equipped with the classical Carnot-Caratheodory norm which we simply denote by $|\cdot|$. For a path $\mathbf{x}\in\cC([0,1],G^{N}(\R^d))$, the $p$-variation norm of $\mathbf{x}$ is defined to be
\begin{align*}
\|\mathbf{x}\|_{p-{\rm var}; [0,1]}=\sup_{\Pi \subset [0,1]}\left(\sum_i |\mathbf{x}_{t_i}^{-1}\otimes \mathbf{x}_{t_{i+1}}|^p\right)^{1/p}
\end{align*}
where the supremum is taken over all subdivisions $\Pi$ of $[0,1]$.

\smallskip

With these notions in hand, let us briefly define what we mean by geometric rough path (we refer to \cite{FV-bk,LQ} for a complete overview): for $p\geq 1$, an element $x: [0,1]\to G^{\lfloor p \rfloor}(\mr^d)$ is said to be a geometric rough path if it is the $p$-var limit of a sequence $S_{\lfloor p \rfloor}(x^{m})$.  In particular, it is an element of the space
$$\cC^{p-{\rm var}; [0,1]}([0,1], G^{\lfloor p \rfloor}(\R^d))=\{\mathbf{x}\in \cC([0,1], G^{\lfloor p \rfloor}(\R^d)): \|\mathbf{x}\|_{p-{\rm var}; [0,1]}<\infty\}.
$$

According to the considerations above, in order to prove that a lift of a $d$-dimensional fBm as a geometric rough path exists it is sufficient to build enough iterated integrals of $B$ by a limiting procedure. Towards this aim, a lot of the information concerning $B$ is encoded in the rectangular increments of the covariance function $R$ (defined by \eqref{covariance}), which are given by
\begin{equation*}
R_{uv}^{st} \equiv \EE\lc (B_t^1-B_s^1) \, (B_v^1-B_u^1) \rc.
\end{equation*}
We then call 2-dimensional $\rho$-variation of $R$ the quantity
\begin{equation*}
V_{\rho}(R) \equiv
\sup \lcl
\lp \sum_{i,j} \lln R_{s_{i} s_{i+1}}^{t_{j}t_{j+1}} \rrn^{\rho} \rp^{1/\rho}; \, (s_i), (t_j)\in \Pi
\rcl,
\end{equation*}
where $\Pi$ stands again for the set of partitions of $[0,1]$.  It is know that (see, for example \cite{FV-bk}) if a process has a covariance function with finite $\rho$-variation for $\rho\in[1,2)$, it admits a lift to a geometric $p$-rough path for all $p>2\rho$. As a consequence, we have the following for fractional Brownian motions:
\begin{proposition}\label{prop:fbm-rough-path}
For a fractional Brownian motion with Hurst parameter $H$, we have $V_{\rho}(R)<\infty$ for all $\rho\ge\frac{1}{2H}$. Consequently, for $H>\frac{1}{4}$ the process $B$ admits a lift $\mathbf{B}$ as a geometric rough path of order $p$ for any $p>\frac{1}{H}$.
\end{proposition}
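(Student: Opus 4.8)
The plan is to prove the two assertions in turn, the second being essentially a packaging of the first. Granting $V_\rho(R)<\infty$ for some $\rho\in[1,2)$, the general criterion recalled just above the statement produces a geometric $p$-rough path lift of $B$ for every $p>2\rho$. Taking $\rho=\frac{1}{2H}$ we have $2\rho=\frac1H$, and the admissibility condition $\rho<2$ reads exactly $H>\frac14$; hence the ``consequently'' part is immediate once the variation bound is known. Moreover $\|\cdot\|_{\ell^{\rho'}}\le\|\cdot\|_{\ell^\rho}$ for $\rho'\ge\rho\ge1$ gives $V_{\rho'}(R)\le V_\rho(R)$, so it suffices to bound $V_\rho(R)$ at the single exponent $\rho=\frac{1}{2H}$. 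I would restrict to the rough regime $H\in(\frac14,\frac12]$, where $\rho\in[1,2)$; for $H>\frac12$ the kernel appearing below is integrable, $R$ has finite $1$-variation, and the lift is a trivial level-one object built by Young integration from the H\"older regularity of $B$.

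First I would record the rectangular increments explicitly. Starting from \eqref{covariance}, the terms $s^{2H},t^{2H},u^{2H},v^{2H}$ cancel and one is left with
\[
R_{uv}^{st}=\tfrac12\big(|t-u|^{2H}-|t-v|^{2H}-|s-u|^{2H}+|s-v|^{2H}\big).
\]
Whenever the rectangle $[s,t]\times[u,v]$ is disjoint from the diagonal $\{a=b\}$, applying the fundamental theorem of calculus to $g(a,b)=-|a-b|^{2H}$ gives the mixed-derivative representation
\[
R_{uv}^{st}=H(2H-1)\int_s^t\!\!\int_u^v |a-b|^{2H-2}\,db\,da,
\]
so that for $H<\frac12$ the kernel is positive, the prefactor negative, and $|R_{uv}^{st}|=H(1-2H)\int_s^t\!\int_u^v|a-b|^{2H-2}\,db\,da$. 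On the diagonal one uses instead the elementary identity $R_{t_it_{i+1}}^{t_it_{i+1}}=\EE\big[(B_{t_{i+1}}-B_{t_i})^2\big]=|t_{i+1}-t_i|^{2H}$.

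Next, for an arbitrary pair of partitions I would pass to their common refinement $(t_k)$ and estimate $\sum_{k,l}\big|R_{t_kt_{k+1}}^{t_lt_{l+1}}\big|^\rho$, splitting into the diagonal squares ($k=l$) and the off-diagonal blocks ($k\ne l$). The diagonal contribution telescopes: with $\rho=\frac{1}{2H}$,
\[
\sum_k |t_{k+1}-t_k|^{2H\rho}=\sum_k |t_{k+1}-t_k|\le 1.
\]
For the off-diagonal blocks I would insert the integral representation and estimate $\sum_{k\ne l}\big(\int\!\int_{[t_k,t_{k+1}]\times[t_l,t_{l+1}]}|a-b|^{2H-2}\big)^{\rho}$. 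An exponent count, transparent on the uniform grid $t_k=k/n$, shows each of the $\sim n$ blocks at separation $\ell$ contributes $\sim n^{-1}\ell^{\,1-1/H}$ after raising to the power $\rho=\frac{1}{2H}$, so the total is $\sim\sum_\ell \ell^{\,1-1/H}$, which converges uniformly in $n$ precisely because $1-\frac1H<-1$, i.e.\ $H<\frac12$.

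The main obstacle is making this off-diagonal estimate uniform over all partitions rather than the uniform grid, and in particular controlling the blocks abutting the diagonal, where $|a-b|^{2H-2}$ is singular. This is exactly where the threshold $\rho=\frac{1}{2H}<2$ (equivalently $H>\frac14$) is used, and it is handled by a two-dimensional Young-type summation built on the mixed-derivative kernel together with H\"older's inequality. This is precisely the fractional-Brownian computation carried out in \cite{FV-bk}, from which $V_{1/(2H)}(R)<\infty$ follows; combined with the criterion above, this yields the geometric rough path lift of $B$ for every $p>\frac1H$ and completes the proof.
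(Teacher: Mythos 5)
The paper gives no proof of this proposition: both ingredients --- the finiteness of the $\frac{1}{2H}$-variation of the fBm covariance and the criterion ``finite $\rho$-variation with $\rho\in[1,2)$ implies a geometric $p$-rough path lift for every $p>2\rho$'' --- are simply quoted from \cite{FV-bk} in the sentence preceding the statement. Your handling of the ``consequently'' part (take $\rho=\frac{1}{2H}$, note $2\rho=\frac1H$ and $\rho<2\Leftrightarrow H>\frac14$, use $V_{\rho'}(R)\le V_{\rho}(R)$ for $\rho'\ge\rho$) is exactly what the paper intends, and your formula for $R^{st}_{uv}$, the mixed-derivative representation, the diagonal telescoping, and the off-diagonal exponent count on the uniform grid are all correct. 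Since you ultimately delegate the decisive estimate to \cite{FV-bk} as well, your argument is on the same footing as the paper's.

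Two remarks on the sketch you added. First, the reduction ``pass to the common refinement'' is not legitimate for two-dimensional $\rho$-variation with $\rho>1$: a coarse rectangle's increment is the sum of the $M$ refined increments it contains, and $\lvert\sum_{m\le M}a_m\rvert^{\rho}\le M^{\rho-1}\sum_{m\le M}\lvert a_m\rvert^{\rho}$ carries an unbounded combinatorial factor, so bounding the sum over the refinement does not bound the sum over the original pair of partitions. Second, the argument in \cite{FV-bk} that you implicitly invoke does not run through your kernel/exponent count; it rests on the one-dimensional estimate $\sum_j\lvert R^{st}_{t_jt_{j+1}}\rvert\le C\,\lvert t-s\rvert^{2H}$, valid for an arbitrary interval $[s,t]$ and an arbitrary partition $(t_j)$ of $[0,1]$, which for $H<\frac12$ follows from the negative correlation of disjoint fBm increments (all off-diagonal terms share a sign and are absorbed by the diagonal one). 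Given that lemma, one concludes directly for arbitrary partitions via
\begin{equation*}
\sum_{i,j}\bigl\lvert R^{s_is_{i+1}}_{t_jt_{j+1}}\bigr\rvert^{\rho}\le\sum_i\Bigl(\sum_j\bigl\lvert R^{s_is_{i+1}}_{t_jt_{j+1}}\bigr\rvert\Bigr)^{\rho}\le C^{\rho}\sum_i\lvert s_{i+1}-s_i\rvert^{2H\rho}=C^{\rho},
\end{equation*}
using $\ell^1\hookrightarrow\ell^{\rho}$ in $j$ and $2H\rho=1$ in $i$, with no refinement and no grid assumption. If you want a self-contained proof rather than a citation, that is the missing lemma to supply.
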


\subsection{Malliavin calculus for fractional Brownian motions} We introduce the basic framework of Malliavin calculus in this subsection.  The reader is invited to read the corresponding chapters in  \cite{Nu06} for further details. Let $\mathcal{E}$ be the space of $\mathbb{R}^d$-valued step
functions on $[0,1]$, and $\mathcal{H}$  the closure of
$\mathcal{E}$ for the scalar product:
\[
\langle (\mathbf{1}_{[0,t_1]} , \cdots ,
\mathbf{1}_{[0,t_d]}),(\mathbf{1}_{[0,s_1]} , \cdots ,
\mathbf{1}_{[0,s_d]}) \rangle_{\mathcal{H}}=\sum_{i=1}^d
R(t_i,s_i).
\]
%When $H>\frac{1}{2}$ it can be shown that $\mathbf{L}^{1/H} ([0,1], \mathbb{R}^d)
%\subset \mathcal{H}$, and when $\frac{1}{4}<H<\frac{1}{2}$ one has
%$C^\gamma\subset \mathcal{H}\subset L^2([0,1])$
%for all $\gamma>\frac{1}{2}-H$.
Let $e_1,\ldots,e_d$ be the canonical basis of $\R^d$, there is an isometry $K^*_H: \ch \rightarrow  L^2([0,1])$  such that $$K^*_H(\1_{[0,t]}\, e_{i}) = \1_{[0,t]}  K_H(t,\cdot)\,e_{i},$$
where the kernel $K_H$ is given by 
\begin{align*}
K_H(t,s)&= c_H \, s^{\frac12 -H} \int_s^t (u-s)^{H-\frac 32} u^{H-\frac 12} \, du,  & H>1/2  \\
K_H(t,s)&= c_{H,1} \lp\frac{s}{t} \rp^{1/2-H} \, (t-s)^{H-1/2} 
+ c_{H,2} \, s^{1/2-H} \int_s^t (u-s)^{H-\frac 12} u^{H-\frac 32} \, du,  &H\le 1/2
\end{align*}
for some constants $c_H$, $c_{H,1},$ and $c_{H,2}$.

% and that for $\phi,\psi \in \mathbf{L}^{1/H} ([0,1], \mathbb{R}^d)$,
%we have
%\[
%\langle \phi , \psi \rangle_{\mathcal{H}}=H(2H-1)\int_0^1 \int_0^1
%\mid s-t \mid^{2H-2} \langle \phi (s) , \psi(t)
%\rangle_{\mathbb{R}^d} ds dt\;.
%\]
Let us remark that $\ch$ is the reproducing kernel Hilbert space for $B$. Denote by $\msh$ the Cameron-Martin space of $B$,
one can show that the operator $\crr:=\crr_H :\ch \rightarrow \msh$ given by
\begin{equation*}
\crr \psi := \int_0^\cdot K_H(\cdot,s) [K^*_H \psi](s)\, ds
\end{equation*}
defines an isometry between $\ch$ and $\msh$. %It is worth noticing at this point that the space $\msh$ yields the accurate notion of Cameron-Martin space in the fBm context (for Brownian motion one obtains $\ch=L^2(\ou)$ and $\msh=W^{1,2}(\ou)$). 
%Let us now quote from \cite[Chapter 15]{FV-bk} a result relating the 2-d regularity of $R$ and the regularity of $\msh$.
%\begin{proposition}\label{prop:imbed-bar-H}
%Let $B$ be a fBm with Hurst parameter $\frac{1}{4}<H<\frac{1}{2}$. Then one has $\msh\subset \cac^{\rho-{\rm var}}$ for $\rho>(H+1/2)^{-1}$. Furthermore, the following quantitative bound holds:
%\begin{equation*}
%\Vert h \Vert_{\msh} \ge \frac{\Vert h \Vert_{\rho-{\rm var}}}{(V_\rho(R))^{1/2}}.
%\end{equation*}
%\end{proposition}
%\begin{remark}
%The above proposition shows that for fBm we have $\msh\subset \cac^{\rho-{\rm var}}$ for $\rho>(H+1/2)^{-1}$. Hence an integral of the form $\int h \, dB$ can be interpreted in the Young sense by means of $p$-variation techniques.
%\end{remark}

%Some isometry arguments allow to define the Wiener integral $B(\phi)=\int_0^{1} \langle \phi_s, dB_s \rangle$ for any element $\phi\in\ch$, with the additional property $\EE[B(\phi_1)B(\phi_2)]=\langle \phi_1,\, \phi_2\rangle_{\ch}$ for any $\phi_1,\phi_2\in\ch$.
A $\mathcal{F}$-measurable real
valued random variable $F$ is said to be cylindrical if it can be
written, for a given $n\ge 1$, as
\begin{equation*}
F=f\lp  B(\phi^1),\ldots,B(\phi^n)\rp=
f \Bigl( \int_0^{1} \langle \phi^1_s, dB_s \rangle ,\ldots,\int_0^{1}
\langle \phi^n_s, dB_s \rangle \Bigr)\;,
\end{equation*}
where $\phi^i \in \mathcal{H}$ and $f:\mathbb{R}^n \rightarrow
\mathbb{R}$ is a $C^{\infty}$ bounded function with bounded derivatives. The set of
cylindrical random variables is denoted $\mathcal{S}$.

The Malliavin derivative is defined as follows: for $F \in \mathcal{S}$, the derivative of $F$ is the $\mathbb{R}^d$ valued
stochastic process $(\mathbf{D}_t F )_{0 \leq t \leq 1}$ given by
\[
\mathbf{D}_t F=\sum_{i=1}^{n} \phi^i (t) \frac{\partial f}{\partial
x_i} \left( B(\phi^1),\ldots,B(\phi^n)  \right).
\]
More generally, we can introduce iterated derivatives. If $F \in
\mathcal{S}$, we set
\[
\mathbf{D}^k_{t_1,\ldots,t_k} F = \mathbf{D}_{t_1}
\ldots\mathbf{D}_{t_k} F.
\]
For any $p \geq 1$, it can be checked that the operator $\mathbf{D}^k$ is closable from
$\mathcal{S}$ into $\mathbf{L}^p(\oom;\mathcal{H}^{\otimes k})$. We denote by
$\mathbb{D}^{k,p}$ the closure of the class of
cylindrical random variables with respect to the norm
\[
\left\| F\right\| _{k,p}=\left( \mathbb{E}\left( F^{p}\right)
+\sum_{j=1}^k \mathbb{E}\left( \left\| \mathbf{D}^j F\right\|
_{\mathcal{H}^{\otimes j}}^{p}\right) \right) ^{\frac{1}{p}},
\]
and
\[
\mathbb{D}^{\infty}=\bigcap_{p \geq 1} \bigcap_{k
\geq 1} \mathbb{D}^{k,p}.
\]

%Estimates of Malliavin derivatives are crucial in order to get information about densities of random variables, and Malliavin matrices as well as non-degenerate random variables will feature importantly in the sequel:
\begin{definition}
Let $F=(F^1,\ldots , F^n)$ be a random vector whose components are in $\mathbb{D}^\infty$. Define the Malliavin matrix of $F$ by
$$\gamma_F=(\langle \mathbf{D}F^i, \mathbf{D}F^j\rangle_{\ch})_{1\leq i,j\leq n}.$$
Then $F$ is called  {\it non-degenerate} if $\gamma_F$ is invertible $a.s.$ and
$$(\det \gamma_F)^{-1}\in \cap_{p\geq1}L^p(\Omega).$$
\end{definition}
\noindent
It is a classical result that the law of a non-degenerate random vector $F=(F^1, \ldots , F^n)$ admits a smooth density with respect to the Lebesgue measure on $\mr^n$. Furthermore, denote by $C_p^\infty(\mr^n)$ the space of smooth functions whose derivatives together with itself have polynomial growth. The following integration by parts formula allows to get more quantitative estimates:

\begin{proposition}
Let $F=(F^1,...,F^n)$ be a non-degenerate random vector whose components are in $\D^\infty$, and $\gamma_F$ the Malliavin matrix of $F$. Let $G\in\D^\infty$ and $\varphi$ be a function in the space $C_p^\infty(\mr^n)$. Then for any multi-index $\alpha\in\{1,2,...,n\}^k, k\geq 1$, there exists an element $H_\alpha\in\D^\infty$ such that
$$\me[\partial_\alpha \varphi(F)G]=\me[\varphi(F)H_\alpha].$$
Moreover, the elements $H_\alpha$ are recursively given by
\begin{align*}
&H_{(i)}=\sum_{j=1}^{d}\delta\left(G(\gamma_F^{-1})^{ij}\mathbf{D}F^j\right)\\
&H_\alpha=H_{(\alpha_k)}(H_{(\alpha_1,..., \alpha_{k-1})}),
\end{align*}
and for $1\leq p<q<\infty$ we have
$$\|H_\alpha\|_{L^p}\leq C_{p,q}\|\gamma_F^{-1}\mathbf{D}F\|^k_{k, 2^{k-1}r}\|G\|_{k,q},$$
where $\frac{1}{p}=\frac{1}{q}+\frac{1}{r}$.
\end{proposition}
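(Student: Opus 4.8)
The plan is to establish the single-index case $k=1$ directly, obtain the general multi-index formula by induction, and then derive the quantitative bound from the continuity of the divergence operator. First I would invoke the chain rule for the Malliavin derivative, $\mathbf{D}(\varphi(F))=\sum_{i}\partial_i\varphi(F)\,\mathbf{D}F^i$, which is licit since $\varphi\in C_p^\infty(\mr^n)$ and $F\in\D^\infty$. Pairing this identity with $\mathbf{D}F^\ell$ in $\ch$ gives $\langle\mathbf{D}(\varphi(F)),\mathbf{D}F^\ell\rangle_\ch=\sum_i\partial_i\varphi(F)\,(\gamma_F)^{i\ell}$, and since $\gamma_F$ is invertible almost surely I would solve for the partials, $\partial_i\varphi(F)=\sum_\ell\langle\mathbf{D}(\varphi(F)),\mathbf{D}F^\ell\rangle_\ch\,(\gamma_F^{-1})^{\ell i}$. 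Multiplying by $G$, taking expectations, and applying the duality $\me[\langle\mathbf{D}\Phi,u\rangle_\ch]=\me[\Phi\,\delta(u)]$ between $\mathbf{D}$ and the divergence $\delta$ with $u=\sum_\ell G(\gamma_F^{-1})^{\ell i}\mathbf{D}F^\ell$ then yields $\me[\partial_i\varphi(F)G]=\me[\varphi(F)\,H_{(i)}]$ with $H_{(i)}=\sum_\ell\delta(G(\gamma_F^{-1})^{\ell i}\mathbf{D}F^\ell)$, which matches the stated formula by symmetry of $\gamma_F^{-1}$. To make the duality legitimate I would first record that $\gamma_F^{-1}\in\D^\infty$: this is the standard consequence of non-degeneracy, obtained by differentiating $\gamma_F\,\gamma_F^{-1}=\id$ and using $(\det\gamma_F)^{-1}\in\cap_p L^p(\Omega)$, so that the integrand $G(\gamma_F^{-1})^{\ell i}\mathbf{D}F^\ell$ lies in $\D^\infty(\ch)\subset\dom\delta$ and $H_{(i)}\in\D^\infty$.

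For general $k$ I would argue by induction on the length of $\alpha$. Writing $\partial_\alpha=\partial_{(\alpha_1,\dots,\alpha_{k-1})}\partial_{\alpha_k}$ (partial derivatives commute), I would apply the induction hypothesis for the length-$(k-1)$ multi-index to the smooth function $\partial_{\alpha_k}\varphi\in C_p^\infty(\mr^n)$ with weight $G$, transferring $\partial_{(\alpha_1,\dots,\alpha_{k-1})}$ onto the weight and producing $H_{(\alpha_1,\dots,\alpha_{k-1})}(G)$; applying the $k=1$ identity once more with this new weight then gives $\me[\partial_\alpha\varphi(F)G]=\me[\varphi(F)\,H_{(\alpha_k)}(H_{(\alpha_1,\dots,\alpha_{k-1})})]$, which is exactly the recursion $H_\alpha=H_{(\alpha_k)}(H_{(\alpha_1,\dots,\alpha_{k-1})})$. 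Membership $H_\alpha\in\D^\infty$ propagates through the recursion because $\delta$ maps $\D^\infty(\ch)$ into $\D^\infty$.

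It remains to prove the quantitative estimate, and here the two tools are Meyer's inequalities, giving the continuity of the divergence on the Malliavin--Sobolev scale in the form $\|\delta(u)\|_{k-1,p}\le C\|u\|_{k,p}$, and the generalized H\"older inequality $\|F_1F_2\|_{k,p}\le C\|F_1\|_{k,p_1}\|F_2\|_{k,p_2}$ for $1/p=1/p_1+1/p_2$. Estimating $H_{(i)}=\sum_\ell\delta(G(\gamma_F^{-1})^{\ell i}\mathbf{D}F^\ell)$ by Meyer and then splitting the product by H\"older produces a bound of the shape $\|\gamma_F^{-1}\mathbf{D}F\|\,\|G\|$ in suitable norms, and iterating through the $k$ levels of the recursion yields $\|H_\alpha\|_{L^p}\le C_{p,q}\|\gamma_F^{-1}\mathbf{D}F\|^k_{k,2^{k-1}r}\|G\|_{k,q}$ with $1/p=1/q+1/r$. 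The hard part will be the careful bookkeeping of the integrability exponents: each level of the recursion applies H\"older to a product whose number of factors essentially doubles, and tracking precisely how the exponent $2^{k-1}r$ emerges, while checking that all intermediate norms are finite (which again rests on $\gamma_F^{-1}\in\D^\infty$), is the only genuinely delicate point, the integration-by-parts algebra itself being routine.
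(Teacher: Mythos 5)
The paper does not actually prove this proposition: it is quoted verbatim as a classical result of Malliavin calculus, and the surrounding text points to Nualart's book \cite{Nu06} (compare the conditional version, Proposition 2.3, which the authors explicitly borrow from \cite[Proposition 2.1.4]{Nu06}). So there is no in-paper argument to compare against; what you have written is essentially the standard textbook proof, and it is correct in structure. The $k=1$ step is exactly right: chain rule, pairing with $\mathbf{D}F^\ell$ to express $\partial_i\varphi(F)$ through $\gamma_F^{-1}$ and $\mathbf{D}(\varphi(F))$, then duality between $\mathbf{D}$ and $\delta$, with the symmetry of $\gamma_F^{-1}$ reconciling the index placement. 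You correctly isolate the two prerequisites that make the duality legitimate, namely $\gamma_F^{-1}\in\D^\infty$ (from non-degeneracy, by differentiating $\gamma_F\gamma_F^{-1}=\id$) and membership of the integrand in $\dom\,\delta$. The induction on the length of $\alpha$, peeling off $\partial_{\alpha_k}$ last so that the new weight is $H_{(\alpha_1,\ldots,\alpha_{k-1})}(G)$, reproduces the stated recursion.

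The one place where you stop short is the quantitative bound $\|H_\alpha\|_{L^p}\leq C_{p,q}\|\gamma_F^{-1}\mathbf{D}F\|^k_{k,2^{k-1}r}\|G\|_{k,q}$. You name the right tools (Meyer's inequalities for the continuity of $\delta$ on the Sobolev scale, the generalized H\"older inequality for products in $\D^{k,p}$ norms) but explicitly defer the exponent bookkeeping that produces the factor $2^{k-1}$. That bookkeeping is genuinely the content of this part of the statement: at each level of the recursion the weight is itself a previous $H$, so one must apply H\"older with a carefully chosen intermediate exponent and verify that after $k$ iterations the worst exponent landing on $\gamma_F^{-1}\mathbf{D}F$ is $2^{k-1}r$ while $G$ retains the exponent $q$. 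As a proof of the proposition as stated, this piece needs to be carried out rather than announced; as a reconstruction of the argument the paper delegates to \cite{Nu06}, your outline is faithful and contains no wrong step.
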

%\begin{remark}\label{est H}
%By the estimates for $H_\alpha$ above, one can conclude that there exist constants $\beta, \gamma>1$ and integers $m, r$ such that
%\begin{align*}
%\|H_\alpha\|_{L^p}\leq C_{p,q}\|\det\gamma^{-1}_F\|^m_{L^\beta}\|\mathbf{D}F\|_{k,\gamma}^r\|G\|_{k,q}.
%\end{align*}
%Indeed, this is a formula after (2.32) given in \cite[Chapter 2]{Nu06}
%\end{remark}
%\begin{remark}
%In what follows, we use $H_\alpha(F,G)$ to emphasize its dependence on $F$ and $G$.
%\end{remark}

Consider process $W=\{W_t, t\in[0,1]\}$ defined by% and verifies that $\mathbb{E}[B_s^{i}\, B_t^{i}]= \int_0^{s\land t} K(t,r) K(s,r)\, dr$. 
$$W_t=B((K_H^*)^{-1}(\mathbf{1}_{[0,t]})).$$
One can show that $W$ is a Wiener process, and the process $B$ has the integral representation
\begin{align*}
B_t=\int_0^tK_H(t,s)dW_s.
\end{align*}

Based on the above representation, one can consider fractional Brownian motions and hence functionals of fractional Brownian motions as functionals of the underline Wiener process $W$. This observation allows one to perform Malliavin calculus with respect to the Wiener process $W$. We shall perform Malliavin calculus with respect to both $B$ and $W$. In order to distinguish them, the Malliavin derivatives (and corresponding Sobolev spaces respectively)  with respect to $W$ will be denoted by $D$  (and by $D^{k,p}$ respectively).
The relation between the two operators $\mathbf{D}$ and $D$  is given by the following (see e.g \cite{Nu06}).

\begin{proposition} 
Let ${D}^{1,2}$ be the Malliavin-Sobolev space corresponding to
the Wiener process $W$. Then $\D^{1,2}=(K^{*})^{-1}{D}^{1,2}$ and for
any $F\in {D}^{1,2}$ we have ${\mathrm D} F = K^{*} \bd F$
whenever both members of the relation are well defined. 
\end{proposition}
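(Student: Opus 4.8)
The plan is to prove the identity first on the dense class $\mathcal{S}$ of cylindrical random variables and then propagate it to the full Sobolev spaces by a closure argument. The key structural observation is that the isometry $K_H^*$ intertwines the two Wiener integrals: by the representation $B_t=\int_0^t K_H(t,s)\,dW_s$ one has, for every $h\in\mathcal{H}$, the identity $B(h)=\int_0^1 (K_H^* h)(s)\,dW_s$. Thus $B(h)$ is simultaneously a first-chaos element with respect to $B$ (with kernel $h\in\mathcal{H}$) and with respect to $W$ (with kernel $K_H^* h\in L^2([0,1])$). Reading off the two definitions of the derivative then gives $\mathbf{D}\big(B(h)\big)=h$ and $D\big(B(h)\big)=K_H^* h$, i.e. $D(B(h))=K_H^*\mathbf{D}(B(h))$, which is the desired relation on first-chaos variables.

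First I would verify $DF=K_H^*\mathbf{D}F$ on $\mathcal{S}$. Writing $F=f\big(B(\phi^1),\dots,B(\phi^n)\big)$ and applying the chain rule for the operator $D$ together with the first-chaos computation $D_t\big(B(\phi^i)\big)=(K_H^*\phi^i)(t)$ yields
\[
D_t F=\sum_{i=1}^n \frac{\partial f}{\partial x_i}\big(B(\phi^1),\dots,B(\phi^n)\big)\,(K_H^*\phi^i)(t)=\big(K_H^*\mathbf{D}F\big)(t),
\]
where in the last step $K_H^*$ is applied $\omega$-wise to the $\mathcal{H}$-valued random variable $\mathbf{D}F=\sum_i \partial_{x_i}f(\cdots)\,\phi^i$. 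Since $K_H^*$ is a pointwise isometry from $\mathcal{H}$ onto $L^2([0,1])$, this immediately gives $\|\mathbf{D}F\|_{\mathcal{H}}=\|DF\|_{L^2([0,1])}$ almost surely, and hence the two Malliavin--Sobolev norms coincide, $\|F\|_{1,2}^{B}=\|F\|_{1,2}^{W}$, for all $F\in\mathcal{S}$ (the $\mathbb{E}[F^2]$ terms being identical).

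Next I would pass to the closure. Because $B(h)=W(K_H^*h)$ and $K_H^*$ is a bijective isometry of $\mathcal{H}$ onto $L^2([0,1])$, the class $\mathcal{S}$ is simultaneously a core for $\mathbf{D}$ on $\D^{1,2}$ and for $D$ on $D^{1,2}$: every cylindrical functional of $B$ is a cylindrical functional of $W$ with test functions $K_H^*\phi^i$, and conversely. Given $F\in\D^{1,2}$, choose $F_m\in\mathcal{S}$ with $F_m\to F$ in $\|\cdot\|_{1,2}^{B}$; by the norm identity $(F_m)$ is also Cauchy in $\|\cdot\|_{1,2}^{W}$, so $F\in D^{1,2}$ with $DF_m\to DF$ in $L^2(\Omega;L^2([0,1]))$. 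Passing to the limit in the relation $DF_m=K_H^*\mathbf{D}F_m$, and using once more the isometry to justify $K_H^*\mathbf{D}F_m\to K_H^*\mathbf{D}F$, gives $DF=K_H^*\mathbf{D}F$; the reverse inclusion $D^{1,2}\subseteq\D^{1,2}$ is symmetric. This proves $\D^{1,2}=(K_H^*)^{-1}D^{1,2}$ together with $\mathbf{D}F=(K_H^*)^{-1}DF$.

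The only genuinely delicate point is the interchange of $K_H^*$ with the $L^2(\Omega)$-limit: this requires $K_H^*$ to act as a bounded operator on the $\mathcal{H}$-valued and $L^2([0,1])$-valued square-integrable random fields, which is exactly what the pointwise isometry furnishes, so no additional estimate is needed. The other input I would explicitly invoke, from the construction of $W$ out of $B$, is the surjectivity of $K_H^*$ onto $L^2([0,1])$, which is what makes $\mathcal{S}$ a common core and renders $(K_H^*)^{-1}$ everywhere defined.
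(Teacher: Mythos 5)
Your argument is correct and is essentially the standard one: the paper offers no proof of this proposition, simply citing Nualart \cite{Nu06}, and the transfer-principle computation you give (intertwining the first chaoses via $B(h)=W(K_H^*h)$, verifying $DF=K_H^*\mathbf{D}F$ on cylindrical functionals, and closing up using that $K_H^*$ is an isometric isomorphism) is precisely the argument found there. You are right to flag surjectivity of $K_H^*$ onto $L^2([0,1])$ as the one nontrivial input making $\mathcal{S}$ a common core; this follows from the construction of $\mathcal{H}$ as the closure of the step functions and the density of $K_H^*(\mathcal{E})$ in $L^2([0,1])$, and is already implicit in the paper's definition $W_t=B((K_H^*)^{-1}\mathbf{1}_{[0,t]})$.
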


%In particular,  we can compute the Malliavin derivative  of $(X_t^x)_{t \ge 0}$ with respect to $W$ as follows:

%\begin{proposition}\label{prop:deriv-sdeW} 
%Let $X^x$ be the solution to equation (\ref{eq:sde}) and suppose the $V_i$'s satisfy Hypothesis \ref{hyp:regularity-V}. Then
%for every $i=1,\ldots,n$, $t>0$, and $x \in \mathbb{R}^n$, we have $X_t^{x,i} \in
%D^{\infty}$ and
%\begin{equation*}
%D^j_s X_t^{x}= \mathbf{J}_{t} Q^j_{st}, \quad j=1,\ldots,d, \quad 
%0\leq s \leq t,
%\end{equation*}
%where $D^j_s X^{x,i}_t $ is the $j$-th component of
%$D_s X^{x,i}_t$, $\mathbf{J}_{t}=\partial_{x} X^x_t$ is defined at Proposition \ref{prop:deriv-sde},   and
% \begin{align}\label{malliavin-W2}
% Q^j_{st}=
% \begin{cases}
% \int_s^t \partial_u K(u,s) \mathbf{J}^{-1}_{u} V_j(X_u) du, \quad H >1/2 \\
%K(t,s)  \mathbf{J}^{-1}_{s} V_j(X_s) + \int_s^t  \lp  \mathbf{J}^{-1}_{r} V_j(X_r)- \mathbf{J}^{-1}_{s} V_j(X_s) \rp  \, \partial_r K(r,s)  dr, \quad H \le 1/2.
%\end{cases}
%\end{align}
%\end{proposition}

In order to estimate the bivariate density function for the random vector $(X_s, X_t)$, we will need a version of conditional integration by parts formula. For this purpose, we  choose to work on the underlying Wiener process $W$. The advantage of doing so is that projections on subspaces are easier to describe in a $L^{2}$ type setting.  Set $L^{2}_{t}\equiv L^{2}([t,1])$ and $\mathbb{E}_t=\mathbb{E}( \cdot | \mathcal{F}_t)$. For a random variable $F$ and $t\in [0,1]$, define  for $m\ge 0$, $p>0$,
\begin{equation*}
\|F\|_{m,p,t}=\left( \mathbb{E}_t\left[
F^{p}\right]
+\sum_{j=1}^m \mathbb{E}_t\left[ \left\| {D}^j F\right\|
_{(L^{2}_{t})^{\otimes j}}^{p}\right] \right) ^{\frac{1}{p}},
\end{equation*}
and
\begin{align}\label{eq:def-conditional-matrix}
{\Gamma}_{F,t}=
\lp\langle D F^i, D F^j\rangle_{L^{2}_{t}}\rp_{1\leq i,\,j\leq n}.
\end{align}
The following formula is borrowed from
\cite[Proposition 2.1.4]{Nu06}:

\begin{proposition} \label{prop:int-parts-cond-W}
Fix $k\geq 1$. Let $F,\,Z_s,\,G\in({D}^{\infty})^n$ be three
random vectors where $Z_s\in\cf_s$-measurable and 
$(\det_{{\Gamma}_{F+Z_s}})^{-1}$ has finite moments of
all orders. Let $g\in\cac_p^\infty(\mr^d)$. Then, for any
multi-index
$\alpha=(\alpha_1,\,\ldots,\,\,\alpha_k)\in\{1,\,\ldots,\,n\}^k$,
there exists a r.v. ${H}^s_\alpha(F,G)\in\cap_{p\geq
1}\cap_{m\geq 0} {D}^{m,p}$ such that
\begin{equation*}
\EE_s\lc(\partial_\alpha g)(F+Z_s) \, G\rc
=
\EE_s\lc g(F+Z_s) \, {H}_\alpha^s(F,G)|\rc,
\end{equation*}
where ${H}_\alpha^s(F,G)$ is
recursively defined by
\begin{equation*}
{H}_{(i)}^s(F,G)=\sum_{j=1}^n {\delta}_s\lp
G\lp{{\Gamma}}_{F,s}^{-1}\rp_{ij}D F^j\rp,
\quad
{H}_{\alpha}^s(F,G)={H}^s_{(\alpha_k)}(F,{H}^s_{(\alpha_1,\,\ldots,\,\alpha_{k-1})}(F,G)).
\end{equation*}
Here ${\delta}_s$ denotes the Skorohod integral with respect
to the Wiener process $W$ on the interval $[s,1]$.
Furthermore, the following norm estimates hold true:
\begin{equation*}
\Vert  H_{\alpha}^{s}(F,G) \Vert_{p,s} \leq 
c_{p,q} \Vert \Gamma_{F,s}^{-1}  \, D F\Vert^k_{k, 2^{k-1}r,s}\Vert G\Vert^k_{k, q,s},
\end{equation*}
 where $\frac1p=\frac1q+\frac1r$.
\end{proposition}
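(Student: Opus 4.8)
The plan is to reduce everything to the one–index case $k=1$ and then induct on the length of the multi-index, carrying out each step of the classical integration-by-parts argument under the conditional expectation $\EE_s$, with the divergence $\delta_s$ and the inner product of $L^2_s=L^2([s,1])$ replacing their global counterparts. The one structural point that makes the shift by $Z_s$ harmless is that $Z_s$ is $\cf_s$-measurable, so its Wiener--Malliavin derivative is supported on $[0,s]$ and $D_rZ_s=0$ for $r\in[s,1]$. Consequently $D(F+Z_s)=DF$ on $[s,1]$, which gives $\Gamma_{F+Z_s,s}=\Gamma_{F,s}$ (this is exactly why the nondegeneracy hypothesis may be imposed on $F+Z_s$ while the matrix $\Gamma_{F,s}$ appears in the conclusion), and the chain rule yields, for $r\in[s,1]$,
\begin{equation*}
D_r\big(g(F+Z_s)\big)=\sum_{l=1}^n (\partial_l g)(F+Z_s)\,D_r F^l.
\end{equation*}

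For the base case I would pair this identity with $DF^j$ in $L^2_s$ to obtain $\langle D(g(F+Z_s)),DF^j\rangle_{L^2_s}=\sum_{l}(\partial_l g)(F+Z_s)(\Gamma_{F,s})_{lj}$, multiply by $(\Gamma_{F,s}^{-1})_{ji}$ and sum over $j$ to solve for $(\partial_i g)(F+Z_s)$, then multiply by $G$, take $\EE_s$, and invoke the conditional duality $\EE_s[\langle DU,u\rangle_{L^2_s}]=\EE_s[U\,\delta_s(u)]$ with $U=g(F+Z_s)$ and $u=\sum_j G(\Gamma_{F,s}^{-1})_{ji}DF^j$. This produces
\begin{equation*}
\EE_s[(\partial_i g)(F+Z_s)\,G]=\EE_s\big[g(F+Z_s)\,H^s_{(i)}(F,G)\big],\qquad H^s_{(i)}(F,G)=\sum_{j=1}^n\delta_s\big(G(\Gamma_{F,s}^{-1})_{ij}DF^j\big),
\end{equation*}
where symmetry of $\Gamma_{F,s}$ reconciles the index placement with the statement. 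The induction is then immediate: writing $\partial_\alpha g=\partial_{(\alpha_1,\ldots,\alpha_{k-1})}(\partial_{\alpha_k}g)$, I would first apply the length-$(k-1)$ formula to the function $\partial_{\alpha_k}g$ to produce the weight $H^s_{(\alpha_1,\ldots,\alpha_{k-1})}(F,G)$, and then apply the base case once more to remove $\partial_{\alpha_k}$, giving exactly the recursion $H^s_\alpha(F,G)=H^s_{(\alpha_k)}\big(F,H^s_{(\alpha_1,\ldots,\alpha_{k-1})}(F,G)\big)$. The norm estimate follows by reading off this recursion: each of the $k$ applications of $\delta_s$ costs one conditional Malliavin derivative and is controlled by the conditional Meyer inequality, while the products of $G$, $\Gamma_{F,s}^{-1}$ and $DF$ are split by H\"older's inequality in the conditional norms $\|\cdot\|_{m,p,s}$; the doubling of integrability exponents across the $k$ iterations is what yields the exponent $2^{k-1}r$ and the powers $k$ in the stated bound.

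The genuine work, as opposed to the algebra above, is to justify that the conditional calculus behaves like the ordinary one: that $\delta_s$ is the adjoint of $D$ restricted to $[s,1]$ in the conditional duality $\EE_s[\langle DU,u\rangle_{L^2_s}]=\EE_s[U\,\delta_s(u)]$, and that the conditional analogues of the Meyer--Sobolev continuity estimates for $\delta_s$ hold. The cleanest route I would take is to exploit the independence of the increments of the Wiener process $W$: conditionally on $\cf_s$, the increments of $W$ over $[s,1]$ form a fresh Brownian motion independent of $\cf_s$, so the entire apparatus becomes standard Malliavin calculus on the Wiener space over $[s,1]$ with $\cf_s$-measurable quantities treated as constants. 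Once this identification is made the unconditional duality and continuity results transfer verbatim, and the regularity $H^s_\alpha(F,G)\in\cap_p\cap_m D^{m,p}$ follows from the nondegeneracy of $\Gamma_{F+Z_s,s}=\Gamma_{F,s}$ together with $F,G\in(D^\infty)^n$. I expect this reduction to be the step requiring the most care, since one must verify that the localization to $[s,1]$ preserves all the integrability needed for the divergence to be well defined and for the moment bounds to close.
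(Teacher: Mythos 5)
Your argument is correct and is the standard derivation underlying this result; the paper itself offers no proof, simply quoting the formula from \cite[Proposition 2.1.4]{Nu06} (whose conditional variant is exactly what you reconstruct). The two points you single out --- that $\cf_s$-measurability of $Z_s$ forces $D_rZ_s=0$ on $[s,1]$, hence $\Gamma_{F+Z_s,s}=\Gamma_{F,s}$, and that conditioning on $\cf_s$ reduces everything to ordinary Malliavin calculus on the Wiener space over $[s,1]$ so that the duality $\EE_s[\langle DU,u\rangle_{L^2_s}]=\EE_s[U\,\delta_s(u)]$ and the Meyer-type continuity of $\delta_s$ transfer verbatim --- are precisely the ingredients that make the classical induction on the length of $\alpha$ and the H\"older/Meyer bookkeeping for the exponent $2^{k-1}r$ go through.
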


\bigskip

\section{Tail and density estimates}
Consider the following stochastic differential equation driven by a fractional Brownian motion with Hurst parameter $H>\frac{1}{4}$,
\begin{align}\label{SDE}
X_t=x+\sum_{i=1}^d\int_0^tV_i(X_s)dB^i_s+\int_0^tV_0(X_s)ds.
\end{align}
Here $V_i, i=0,1,...,d$ are $C^\infty$-bounded vector fiefs on $\mr^d$ which form a uniform elliptic system (recall Assumption \ref{hyp:elliptic}).  Proposition \ref{prop:fbm-rough-path} ensures the existence of a lift of $B$ as a geometrical rough path, which allows one to consider equation (\ref{SDE}) as a rough differential equation. Then general rough paths theory (see e.g.~\cite{FV-bk,Gu}) together with some integrability results { (\cite{CLL, FR})} shows that equation (\ref{SDE}) admits a unique finite $p$-var continuous solution $X$ in the rough paths sense, for any $p> \frac{1}{H}$. Moreover, there exists $\lambda>0$ such that
\begin{equation*}
\me\left[\exp\lambda\left(\sup_{t\in [0,1]}|X_t|^{(2H+1)\wedge 2}\right)\right]<\infty.
\end{equation*}

The above moment estimate (or rather tail estimate) extends to the increments of $X$.
\begin{proposition}\label{prop:exp-moments-rdes}Assume the uniform elliptic condition for $V_1,...,V_d$. There exist non negative constants $C$ and $c$ not depending on $s,t\in[0,1]$ and $\xi$ such that
\begin{align*}\PP\lp \sup_{s\leq u<v\leq t} |X_v-X_u| \ge \xi \rp \le C  \exp\left\{ -\frac{c \,  \xi^{(2H+1)\wedge 2}}{(t-s)^{2H}} \right\}.
\end{align*}
\end{proposition}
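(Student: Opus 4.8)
The plan is to reduce the two-parameter supremum to a deterministic rough-path bound for the solution map and then to invoke the Gaussian integrability machinery of \cite{CLL}, specialised to fractional Brownian motion through the scaling of the $2$-dimensional $\rho$-variation of its covariance. First, by the triangle inequality $\sup_{s\le u<v\le t}|X_v-X_u|\le 2\sup_{u\in[s,t]}|X_u-X_s|$, so it suffices to estimate the one-parameter supremum of $u\mapsto|X_u-X_s|$. Restricted to $[s,t]$, the path $X$ solves equation (\ref{SDE}) driven by $\mathbf{B}$ on $[s,t]$ and started at $X_s$; since $V_0,V_1,\dots,V_d$ are $C^\infty$-bounded, the deterministic estimates for RDEs (see \cite{FV-bk}) bound the solution increment purely in terms of the driving signal, \emph{uniformly in the starting point}. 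Concretely, writing $N_{\alpha;[s,t]}(\mathbf{B})$ for the greedy partition count (the number of intervals needed to split $[s,t]$ so that $\|\mathbf{B}\|_{p\text{-var}}\le\alpha$ on each), one has a bound of the form $\sup_{u\in[s,t]}|X_u-X_s|\le C\,(1+N_{\alpha;[s,t]}(\mathbf{B}))$, where the constant depends only on $\alpha$ and the vector fields; the drift $\int V_0(X)\,ds$ contributes only a deterministic term of order $(t-s)$, absorbed by enlarging the constants.

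The controlling quantity for the driver over $[s,t]$ is $\omega(s,t):=V_\rho(R;[s,t]^2)$ with $\rho=1/(2H)$, which is finite by Proposition \ref{prop:fbm-rough-path}. By the stationarity of the increments and the self-similarity $B_{s+r(t-s)}-B_s\stackrel{d}{=}(t-s)^H B_r$, every rectangular increment of $R$ over $[s,t]^2$ carries the common factor $(t-s)^{2H}$, so that $\omega(s,t)=(t-s)^{2H}\,V_\rho(R;[0,1]^2)$. The integrability theorem of \cite{CLL}, applied to the Gaussian rough path $\mathbf{B}$ over $[s,t]$, then yields a tail for the greedy count in which this controlling quantity appears in the denominator,
$$\PP\big(N_{\alpha;[s,t]}(\mathbf{B})\ge n\big)\le C\exp\Big(-c\,\frac{n^{q}}{\omega(s,t)}\Big),\qquad q=(2H+1)\wedge 2.$$
Here $q$ is read off from the case $t-s=1$, where this bound must reproduce the quoted global estimate $\EE\big[\exp\big(\lambda\sup_{u}|X_u|^{(2H+1)\wedge 2}\big)\big]<\infty$, since $\sup_u|X_u-X_0|\le C(1+N_{\alpha;[0,1]})$.

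Combining the two steps and substituting $\omega(s,t)=(t-s)^{2H}V_\rho(R;[0,1]^2)$, I would obtain, after relabelling constants,
$$\PP\Big(\sup_{s\le u<v\le t}|X_v-X_u|\ge\xi\Big)\le\PP\big(N_{\alpha;[s,t]}(\mathbf{B})\ge c_1\xi-c_2\big)\le C\exp\Big(-\frac{c\,\xi^{(2H+1)\wedge 2}}{(t-s)^{2H}}\Big),$$
where the range $\xi\le c_2/c_1$ is handled trivially by choosing $C$ large enough that the right-hand side exceeds $1$. This is the asserted estimate, with constants independent of $s,t$ and $\xi$ because the dependence on the interval has been entirely funneled into the explicit factor $\omega(s,t)\asymp(t-s)^{2H}$.

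The main obstacle is establishing the tail of the greedy count in precisely this form: that the interval-dependent quantity $\omega(s,t)$ enters the denominator to the \emph{first} power while the exponent on $\xi$ is $(2H+1)\wedge 2$. These two effects are genuinely decoupled, and this is exactly why a naive self-similarity argument fails for $H<\frac12$. Self-similarity of the \emph{full} increment would force the Gaussian exponent $2$ with denominator $(t-s)^{2H}$; the correct exponent $2H+1$ arises only because the vector fields are fixed (not rescaled), so the deviation from exact scaling lets the heavier-tailed level-two (area) corrections dominate the light-tailed first level once $H<\frac12$. Making this degradation of the Weibull exponent from $2$ to $2H+1$ quantitative, while simultaneously keeping the sharp power $(t-s)^{2H}$ of the interval length, is the delicate point and the place where the Cass–Litterer–Lyons localization via greedy partitions, rather than a direct concentration inequality, is essential.
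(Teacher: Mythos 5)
Your proposal is correct and follows essentially the same route as the paper: both rest on the Cass--Litterer--Lyons greedy partition count $N_{\alpha;[s,t]}(\mathbf{B})$, the deterministic Friz--Victoir bound controlling solution increments by $c_V\alpha$ on each greedy interval (hence $\sup|X_v-X_u|\lesssim 1+N$), and the CLL tail estimate for $N$ whose denominator scales as $(t-s)^{2H}$ by stationarity/self-similarity of the fBm increments, with Weibull exponent $(2H+1)\wedge 2$. The paper simply cites Theorem 6.4 of \cite{CLL} for that tail rather than re-deriving the exponent, and it sums increments over the partition directly instead of first reducing the two-parameter supremum to a one-parameter one, but these are cosmetic differences.
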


\begin{proof}
We borrow the idea from \cite{FR}. Consider first the case $1/4<H<1/2$. Taking up the notation of \cite{CLL} we consider $p>2\rho$ and the control
\begin{equation*}
\om_{\bb,p}(u,v)=\|\bb\|_{p-{\rm var};[u,v]}^{p}.
\end{equation*}
Then \cite[Lemma 10.7]{FV-bk} states that there exists constant $c_V$ only depends on the vector fields $V_i's$ such that
\begin{align}\label{eq:bnd-X-pvar}
\|X\|_{p-{\rm var};[u,v]}& \le c_V \lp \|\bb\|_{p-{\rm var};[u,v]} \vee \|\bb\|_{p-{\rm var};[u,v]}^{p} \rp\\
&= c_V  \lp \om_{\bb,p}(u,v)^{1/p} \vee \om_{\bb,p}(u,v) \rp.\nonumber
\end{align}
In particular, for any $t_{i}<t_{i+1}$ we have
\begin{equation}\label{eq:ineq-davies-increments}
|\der X_{t_{i}t_{i+1}}| \le 
c_V \lp \lc \om_{\bb,p}(t_{i},t_{i+1})\rc^{1/p} \vee \om_{\bb,p}(t_{i},t_{i+1}) \rp .
\end{equation}
Consider now $\alpha\ge 1$ and construct a partition of $[s,t]$ inductively in the following way: we set $t_0=s$ and
\begin{equation*}
t_{i+1}= \inf\lcl  u >t_{i} ; \, \|\bb\|^p_{p-{\rm var};[t_{i},u]} \ge \alpha \rcl.
\end{equation*}
We then set $N_{\alpha,s,t,p}=\min\{n\ge 0;\, t_n \geq t\}$. Observe that since we have taken $\alpha\ge 1$, inequality~\eqref{eq:ineq-davies-increments} becomes $$|\der X_{t_{i}t_{i+1}}| \le  c_V \,\om_{\bb,p}(t_{i},t_{i+1}) = c_V \,\alpha.$$   

Now for any fixed $u<v$ in $[s,t]$,  suppose $l$ and $m$ are such that $u\in[t_l, t_{l+1}]$ and $v\in[t_m, t_{m+1}]$. We have
\begin{align*}
|X_u-X_v|\le& \sum_{i=0}^{l} |\der X_{t_{i}t_{i+1}}|+|\der X_{t_l,u}|+|\der X_{u,t_{l+1}}| \\
 &\quad+ \sum_{i=l+1}^{m} |\der X_{t_{i}t_{i+1}}| +|\der X_{t_m,v}|+|\der X_{v,t_{m+1}}| + \sum_{i=m+1}^{N_{\alpha,s,t,p}} |\der X_{t_{i}t_{i+1}}|\\
 \le& c_V \,\alpha \, (N_{\alpha,t,p}+2)
\end{align*}
Since fractional Brownian motions have stationary increments, Theorem 6.4 in \cite{CLL} implies that there exists constants $C$ and $c$ not depending on $s,t$ and $\xi$ such that
\begin{align}\label{tail N}
\PP \lp N_{\alpha,s,t,p} +2> \xi \rp
\leq C\exp\left\{  -\frac{c \, \xi^{2H+1}}{(t-s)^{2H}}\right\}.
\end{align}
This easily yields
\begin{align*}
\PP\lp \sup_{s\leq u<v\leq t} |X_v-X_u| \ge \xi \rp
\le \PP \lp c_V \,\alpha \, (N_{\alpha,t,p} +2)> \xi \rp
\leq C \exp\left\{-\frac{c\xi^{2H+1}}{(t-s)^{2H}} \right\},
\end{align*}
which is our claim. The case $H>1/2$ is handled along the same lines, except that the coefficient $\xi^{2H+1}$ in \eqref{tail N} is replaced by $\xi^2$.
\end{proof}

The vector $X_t$ is a typical example of a smooth random variable in the Malliavin sense.  Recall that $D$ is the Malliavin derivative operator with respect to the underline Wiener process $W$ and $\Gamma_{F,t}$ is defined in (\ref{eq:def-conditional-matrix}) for a random variable $F$. The following estimate is a restatement of Proposition 5.9 in  \cite{BNOT}.

\begin{lemma} \label{estimate bivar mall} Let $\epsilon \in (0,1)$, and consider $H\in(1/4,1)$. Then there exist constants $C,r >0$ depending on $\epsilon$ such that for $ \epsilon \le s \le t \le 1$ the following holds:
 \begin{eqnarray*}
 \Vert \Gamma_{X_{t}-X_{s},s}^{-1}   \Vert^d_{d, 2^{d+2},s} 
 &\le& 
 \frac{C}{(t-s)^{2dH}} \, \mathbb{E}_s^{\frac{d}{2^{d+2}}} \!(1+ G)  \\
 \Vert D(X_{t}-X_{s}) \Vert^d_{d, 2^{d+2},s} 
 &\le& 
 C (t-s)^{dH} \, \mathbb{E}_s^{\frac{d}{2^{d+2}}}  (1+  G),
\end{eqnarray*}
where $G$ is a random variable smooth in the Malliavin sense and has finite moments to any order.
\end{lemma}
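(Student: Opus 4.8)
The plan is to derive both bounds from the classical representation of the Malliavin derivative of the solution flow of (\ref{SDE}). Let $J_t=\partial X_t/\partial x$ be the Jacobian, which solves the linearized equation obtained by differentiating (\ref{SDE}); together with its inverse $J_t^{-1}$ it admits conditional moments of all orders in every norm $\|\cdot\|_{m,p,s}$, and these Jacobian moments are precisely what gets absorbed into the smooth random variable $G$ on the right-hand sides. The basic identity is $\bd_r^i X_t=J_t J_r^{-1}V_i(X_r)$ for $r\le t$ (and $=0$ for $r>t$), so that $D_r X_t=K_H^*(\bd X_t)(r)$ through the relation $D=K_H^*\bd$. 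Since $X_s$ is $\cf_s$-measurable we have $D_r X_s=0$ for $r\in[s,1]$, and because $K_H^*$ preserves the support $[0,t]$ of $\bd X_t$, the function $r\mapsto D_r(X_t-X_s)=D_r X_t$ is, as an element of $L^2_s=L^2([s,1])$, supported on $[s,t]$. This support restriction is the source of the interval-length scaling in both estimates.

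For the second inequality I would bound $\|D(X_t-X_s)\|_{d,2^{d+2},s}$ directly. Differentiating $X_t$ up to $d$ times and inserting the flow formula produces, at each order, products of the factors $J_t J_r^{-1}$, of the bounded vector fields $V_i$ and their derivatives, integrated against $K_H^*$ over $[s,t]$. The single analytic input needed is the self-similar kernel estimate $\|K_H^*(\mathbf{1}_{[s,t]}f)\|_{L^2_s}\lesssim (t-s)^H\|f\|_\infty$, which holds for both $H<1/2$ and $H>1/2$ from the explicit form of the kernel; applying it once per derivative yields the exponent $dH$. The conditional $L^p$-moments of the Jacobian factors are collected into $\mathbb{E}_s^{d/2^{d+2}}(1+G)$, and this direction is then essentially bookkeeping.

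For the first inequality, which is the crux, I would factor the matrix. Since $J_t$ does not depend on the integration variable $r$, it commutes out of $K_H^*$ and out of the $L^2_s$ inner product, giving $\Gamma_{X_t-X_s,s}=J_t\,Q_{s,t}\,J_t^{*}$, where $Q_{s,t}=\big(\sum_i\langle K_H^*(\mathbf{1}_{[s,t]}J_\cdot^{-1}V_i(X_\cdot))^j,\,K_H^*(\mathbf{1}_{[s,t]}J_\cdot^{-1}V_i(X_\cdot))^k\rangle_{L^2_s}\big)_{jk}$ is a reduced covariance matrix. Because $J_t^{-1}$ has all conditional moments, it suffices to bound $Q_{s,t}$ from below, i.e.\ to show $v^{*}Q_{s,t}v\ge c\,\la\,(t-s)^{2H}|v|^2$ up to a lower-order remainder. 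I would isolate the leading contribution coming from $r$ near $s$, where $J_r^{-1}V_i(X_r)$ is close to $J_s^{-1}V_i(X_s)$; the corresponding kernel factor has exact order $(t-s)^{2H}$, and Assumption \ref{hyp:elliptic} (via the invertible change of frame by $J_s^{-1}$) bounds the quadratic form below by a multiple of $\la|v|^2$. Inverting $\Gamma_{X_t-X_s,s}$ and passing to the norm $\|\cdot\|_{d,2^{d+2},s}$ then produces the factor $(t-s)^{-2dH}$ together with the Jacobian moments inside $G$.

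The main obstacle is exactly this lower bound on $Q_{s,t}$: it must be uniform in $\epsilon\le s\le t\le 1$ and in the direction $v$, and quantitative in the conditional Sobolev norm rather than merely almost sure. Separating the genuinely lower-order remainder from the leading $(t-s)^{2H}$ term requires a local-nondeterminism-type lower bound on the kernel, and for $H<1/2$ this is delicate because $K_H^*$ is a singular, fractional-derivative-type operator. This is precisely the technical content of Proposition 5.9 in \cite{BNOT}, whose conclusion the lemma records.
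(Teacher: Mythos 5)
The paper does not actually prove this lemma: it is explicitly a restatement of Proposition 5.9 of \cite{BNOT}, so there is no internal argument to compare against. Your sketch reproduces the architecture of the proof in that reference --- the flow representation $\bd_r^i X_t=J_tJ_r^{-1}V_i(X_r)$ for $r\le t$, the transfer $D=K_H^*\bd$, the support of $D(X_t-X_s)$ on $[s,t]$ as an element of $L^2_s$, the factorization $\Gamma_{X_t-X_s,s}=J_t\,Q_{s,t}\,J_t^{*}$, and the use of Assumption \ref{hyp:elliptic} to bound $Q_{s,t}$ below --- and you correctly locate the crux in the lower bound on $Q_{s,t}$.

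Two points in the sketch would not close as written. First, the upper kernel bound $\Vert K_H^*(\mathbf{1}_{[s,t]}f)\Vert_{L^2}\lesssim (t-s)^H\Vert f\Vert_\infty$ fails for $H<1/2$ when $f$ is merely bounded: there $K_H^*$ acts as a fractional derivative of order $\tfrac12-H$, and one needs the $\gamma$-H\"older norm of $f$ for some $\gamma>\tfrac12-H$ (this is exactly where $H>\tfrac14$ enters, since $r\mapsto J_r^{-1}V_i(X_r)$ is only $\gamma$-H\"older for $\gamma<H$); those H\"older norms are what actually populate the random variable $G$. Second, and more seriously, isolating the contribution of $r$ near $s$ does not by itself yield $v^{*}Q_{s,t}v\ge c(t-s)^{2H}|v|^2$ uniformly in $t-s$: for $H<1/2$ the inner product $\langle \mathbf{1}_{[s,t]}f,\mathbf{1}_{[s,t]}g\rangle_{\ch}$ is not monotone in the interval, and the ``remainder'' is a priori of the same order $(t-s)^{2H}$ as the leading term, so it cannot simply be declared lower order. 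The step that makes this work in \cite{BNOT} is an interpolation-type lower bound on $\Vert \mathbf{1}_{[s,t]}f\Vert_{\ch}$ in terms of $(t-s)^{H}$, $\inf_{[s,t]}|f|$ and a negative power of the H\"older norm of $f$, combined with ellipticity to keep $\inf|f|$ bounded below; this is what produces the inverse moments of $\det\Gamma_{X_t-X_s,s}$ at the rate $(t-s)^{-2dH}$. Since you explicitly defer precisely this step to the reference, your proposal is an accurate account of where the lemma comes from rather than an independent proof of it --- which, given that the paper itself only cites \cite{BNOT}, is a defensible position, but the two caveats above should be kept in mind if one ever tries to write the argument out.
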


With the above lemma in hand, we are able to obtain an upper bound for the density of $X_t-X_s$.
\begin{theorem}\label{density increment}
Fix $\epsilon\in(0,1)$. Let $p_{t-s}(z)$ be the density function of  $X_t-X_s$ for $s,t\in[\epsilon,1]$. There exits a positive constant $C$ depending on $\epsilon$
        such that for all $s<t$, we have
\begin{equation*} 
p_{t-s}(z) \leq  \frac{C}{(t-s)^{dH}} \exp\left\{-\frac{|z|^{(2H+1)\wedge 2}}{C(t-s)^{2H}} \right\}.
\end{equation*}
\end{theorem}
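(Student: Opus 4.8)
The plan is to turn the bound into a density representation via the conditional integration-by-parts formula of Proposition \ref{prop:int-parts-cond-W}, feed in the conditional Malliavin estimates of Lemma \ref{estimate bivar mall} to produce the polynomial prefactor $(t-s)^{-dH}$, and extract the exponential factor from the stationary-increment tail bound of Proposition \ref{prop:exp-moments-rdes}. Write $F=X_t-X_s$. By Lemma \ref{estimate bivar mall} the conditional Malliavin matrix $\Gamma_{F,s}$ is invertible with conditional inverse moments of all orders, so $F$ is non-degenerate given $\cf_s$. For a fixed $z$, set $\sigma_i=\mathrm{sign}(z^i)$ (with the convention $\mathrm{sign}(0)=1$) and apply Proposition \ref{prop:int-parts-cond-W} with $Z_s=0$, $G=1$, and $\alpha=(1,2,\ldots,d)$ to a smooth approximation of the indicator of the orthant $\{x:\sigma_i(x^i-z^i)>0,\ i=1,\ldots,d\}$, whose mixed derivative $\partial_1\cdots\partial_d$ is $\delta_z$. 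Taking $\me[\me_s[\cdot]]=\me[\cdot]$ and absorbing the overall sign $(-1)^{\#\{i:\sigma_i=-1\}}$ into an absolute value gives
\[
p_{t-s}(z)\;\le\;\me\!\left[\mathbf{1}_{E_z}\,\bigl|H^s_{(1,\ldots,d)}(F,1)\bigr|\right],
\qquad
E_z=\bigcap_{i=1}^{d}\{\sigma_i(F^i-z^i)>0\}.
\]
The point of choosing the orthant direction coordinatewise is that on $E_z$ one has $|F^i|>|z^i|$ for every $i$, hence $E_z\subseteq\{|F|>|z|\}$; this converts the orthant event into one controlled by the full increment.

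\textbf{Splitting off the tail.} By the Cauchy--Schwarz inequality,
\[
p_{t-s}(z)\;\le\;\PP\bigl(|F|>|z|\bigr)^{1/2}\;\me\!\left[H^s_{(1,\ldots,d)}(F,1)^2\right]^{1/2}.
\]
Since $|X_t-X_s|\le\sup_{s\le u<v\le t}|X_v-X_u|$, Proposition \ref{prop:exp-moments-rdes} controls the first factor by $C\exp\{-c|z|^{(2H+1)\wedge 2}/(t-s)^{2H}\}$, which is already the desired exponential (with a constant that will be halved after the square root).

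\textbf{The weight estimate.} For the second factor write $\me[(H^s)^2]=\me\bigl[\me_s[(H^s)^2]\bigr]=\me\bigl[\|H^s_{(1,\ldots,d)}(F,1)\|_{2,s}^{2}\bigr]$ and invoke the conditional norm bound of Proposition \ref{prop:int-parts-cond-W} with $k=d$. Choosing $p=2$, $r=8$, $q=8/3$ (so that $1/p=1/q+1/r$ and $2^{k-1}r=2^{d+2}$) and $G=1$ (whence $\|1\|_{d,q,s}=1$) yields $\|H^s_{(1,\ldots,d)}(F,1)\|_{2,s}\le c\,\|\Gamma_{F,s}^{-1}DF\|^d_{d,2^{d+2},s}$. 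A Leibniz/H\"older estimate for the conditional Sobolev norm of the product (at the harmless cost of slightly enlarging the integrability exponents before reducing back) splits this into $\|\Gamma_{F,s}^{-1}\|^d_{d,\cdot,s}\,\|DF\|^d_{d,\cdot,s}$, and inserting the two bounds of Lemma \ref{estimate bivar mall} gives
\[
\|H^s_{(1,\ldots,d)}(F,1)\|_{2,s}
\;\le\;\frac{C}{(t-s)^{dH}}\,\me_s^{\,2d/2^{d+2}}(1+G),
\]
where the powers $(t-s)^{-2dH}$ and $(t-s)^{dH}$ from the two estimates combine into $(t-s)^{-dH}$. Since $G$ has finite moments of all orders uniformly in $s,t\in[\epsilon,1]$, squaring, taking $\me$, and using conditional Jensen bounds $\me[(H^s)^2]\le C(t-s)^{-2dH}$, i.e.\ $\me[(H^s)^2]^{1/2}\le C(t-s)^{-dH}$ with $C$ depending only on $\epsilon$.

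\textbf{Conclusion and main obstacle.} Combining the three displays gives
\[
p_{t-s}(z)\;\le\;\frac{C}{(t-s)^{dH}}\exp\!\left\{-\frac{c\,|z|^{(2H+1)\wedge 2}}{2(t-s)^{2H}}\right\},
\]
which is the claim after renaming constants. The conceptual key, and the step I expect to require the most care, is the weight estimate: one must condition on $\cf_s$ to access the clean conditional scaling of Lemma \ref{estimate bivar mall} (the \emph{unconditional} Malliavin matrix of the increment does not scale as $(t-s)^{2dH}$ because $X_s$ and $X_t$ share the randomness on $[0,s]$), while simultaneously orienting the orthant so the indicator sits inside $\{|X_t-X_s|>|z|\}$ so that the stationary-increment tail of Proposition \ref{prop:exp-moments-rdes} applies. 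The remaining technical burden is the Leibniz product rule for the conditional Sobolev norms with the exponent bookkeeping that matches $2^{d+2}$ in Lemma \ref{estimate bivar mall}, together with the uniform-in-$(s,t)$ moment control of $G$ that keeps $C$ dependent only on $\epsilon$.
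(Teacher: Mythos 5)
Your proposal follows essentially the same route as the paper's proof: condition on $\mathcal{F}_s$, apply the conditional integration-by-parts formula of Proposition \ref{prop:int-parts-cond-W} to the (orthant-)indicator so that the density is represented as an expectation of a Malliavin weight times an indicator, invoke Lemma \ref{estimate bivar mall} to produce the $(t-s)^{-dH}$ prefactor, and extract the exponential factor from Proposition \ref{prop:exp-moments-rdes} via Cauchy--Schwarz. The differences are cosmetic --- you apply the final Cauchy--Schwarz unconditionally rather than conditionally first, and you spell out the orthant/sign trick that the paper leaves implicit in its shorthand $\mathbf{1}_{(X_t-X_s>\xi_2)}$ --- so the argument is correct and matches the paper's.
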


\begin{proof}We first write
\begin{align*}
{p}_{t-s}(z)
& = \EE\lc \delta_{z}(X_t-X_s) \,  \rc\\
 & = \EE  \EE_{s}\lc \delta_{z}(X_t-X_s)  \rc.
\end{align*}
Next, we bound $M_{st}= \EE_{s}\lc \delta_{z}(X_t-X_s)  \rc $ by first using the conditional integration by parts formula in Proposition \ref{prop:int-parts-cond-W} and then Cauchy-Schwarz inequality.
\begin{align*} 
|M_{s,t}| \le  C \Vert \Gamma_{X_{t}-X_{s},s}^{-1}   \Vert^d_{d, 2^{d+2},s} \, \Vert D(X_{t}-X_{s}) \Vert^d_{d, 2^{d+2},s}  \EE_{s}^{1/2}\lc \1_{(X_t-X_s > \xi_{2})}  \rc.
\end{align*}
Thus, owing to Lemma \ref{estimate bivar mall} we obtain:
\begin{equation}\label{eq:first-bnd-bivariate}
{p}_{t-s}(z) \le  
\frac{C}{(t-s)^{dH}}   
\EE\lc \,
\mathbb{E}_s^{\frac{d}{2^{d+1}}}\! (1+ G)^2 \,
\EE_{s}^{1/2}\!\left[ \1_{(X_t-X_s > \xi_{2})} \right] \rc.
\end{equation}
Then an application of Cauchy-Schwarz inequality together with Proposition \ref{prop:exp-moments-rdes} finishes the proof.
\end{proof}

Finally we slightly improve the estimate in \cite{BNOT} for the bivariate density of $(X_s, X_t)$.

\begin{theorem}\label{density bivariate}
Fix $\epsilon\in(0,1)$ and $\gamma<H$. Let $p_{s,t}(z_1,z_2)$ be the joint density of the random vector $(X_s, X_t)$ for $s,t\in[\epsilon,1]$. There is a positive constant $C$ depending on $\epsilon$
        such that for all $s<t$, we have
\begin{equation*} 
p_{s,t}(z_1, z_2) \leq  \frac{C}{(t-s)^{dH}} \exp\left\{-\frac{|z_1-z_2|^{2\gamma}}{C|t-s|^{2\gamma^2}} \right\}.
\end{equation*}

\end{theorem}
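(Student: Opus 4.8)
The plan is to reduce the bivariate bound to a statement about the pair $(X_s, X_t - X_s)$ and then mimic the proof of Theorem \ref{density increment}, except that now \emph{two} Dirac masses must be removed by integration by parts. Since the linear change of variables $(X_s, X_t)\mapsto (X_s, X_t - X_s)$ has unit Jacobian, the joint density satisfies $p_{s,t}(z_1,z_2)=q_{s,t}(z_1, z_2-z_1)$, where $q_{s,t}$ denotes the density of $(X_s, X_t - X_s)$. Writing $w = z_2 - z_1$, it therefore suffices to bound $q_{s,t}(z_1, w)$ and to produce exponential decay in $w$ only (no decay in $z_1$ is needed, matching the form $|z_1-z_2|$ in the statement). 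Starting from $q_{s,t}(z_1,w)=\EE\lc \delta_{z_1}(X_s)\,\delta_{w}(X_t - X_s)\rc$ and conditioning on $\mathcal{F}_s$, the factor $\delta_{z_1}(X_s)$ is $\mathcal{F}_s$-measurable, so $q_{s,t}(z_1,w)=\EE\lc \delta_{z_1}(X_s)\,\EE_s[\delta_w(X_t-X_s)]\rc$.

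First I would treat the inner conditional expectation exactly as in Theorem \ref{density increment}: apply the conditional integration by parts formula of Proposition \ref{prop:int-parts-cond-W} on $[s,1]$ (multi-index of length $d$, $G=1$) to replace $\delta_w(X_t-X_s)$ by the indicator $\1_{(X_t-X_s>w)}$ against a Malliavin weight $H^s_{(1,\ldots,d)}(X_t-X_s,1)$, whose conditional norms are controlled by Lemma \ref{estimate bivar mall}; this produces the expected prefactor $C/(t-s)^{dH}$. The genuinely new difficulty is the outer Dirac mass $\delta_{z_1}(X_s)$: to get a pointwise density bound this must also be removed, by a further Malliavin integration by parts (on $[0,s]$) applied with $G$ equal to the $\mathcal{F}_s$-measurable functional coming from the inner step. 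Because this second integration by parts differentiates the inner weight, the latter must be Malliavin differentiable one order higher than what was needed in Theorem \ref{density increment}; this is precisely the ``integration by parts twice'' that costs ``an extra order of smoothness'' mentioned in the introduction, and it is why Lemma \ref{estimate bivar mall} is stated with the norms $\|\cdot\|_{d,2^{d+2},s}$ rather than a lower order.

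After both integrations by parts and a Cauchy--Schwarz step, the density is bounded by $C(t-s)^{-dH}$ times the $L^2$-norm of a weight multiplied by the square root of a tail factor governed by $\PP(|X_t-X_s|>|w|)$, at which point Proposition \ref{prop:exp-moments-rdes} supplies the exponential decay in $w$. The subtlety is that the additional derivative taken in the second integration by parts cannot be placed on the indicator $\1_{(X_t-X_s>w)}$ (which is not smooth); one must keep the non-smooth factor to the very end and absorb the extra, unbounded Malliavin weight by H\"older's inequality. Trading part of the decay for integrability of this weight---equivalently, using only the $\gamma$-H\"older regularity of $X$ for some $\gamma<H$ rather than its sharp scaling---degrades the sharp exponent $(2H+1)\wedge 2$ to $2\gamma$ and the denominator power $2H$ to $2\gamma^2$, which is exactly the form $\exp\{-|w|^{2\gamma}/(C|t-s|^{2\gamma^2})\}=\exp\{-(|w|/|t-s|^{\gamma})^{2\gamma}\}$ asserted in the statement.

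The main obstacle I anticipate is the bookkeeping of this second integration by parts: verifying that the inner Malliavin weight lies in the requisite higher-order Sobolev space with quantitative conditional norm bounds, and then tracking the exact powers of $(t-s)$ produced by inverting the conditional Malliavin covariance matrix to the higher order. Balancing those negative powers of $(t-s)$ against the increment tail of Proposition \ref{prop:exp-moments-rdes}---and checking that this balance forces the loss from $H$ down to an arbitrary $\gamma<H$ but no further---is the technical heart of the argument; the change of variables and the overall two-step integration-by-parts scheme are, by contrast, routine given Theorem \ref{density increment}.
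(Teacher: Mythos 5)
Your overall architecture coincides with the paper's: the change of variables to $(X_s,X_t-X_s)$, the conditioning on $\cf_s$, the inner conditional integration by parts via Proposition \ref{prop:int-parts-cond-W} together with Lemma \ref{estimate bivar mall} producing the prefactor $C(t-s)^{-dH}$, the outer integration by parts to regularize $\delta_{z_1}(X_s)$, and the diagnosis that the degradation from the exponent $(2H+1)\wedge 2$ over $(t-s)^{2H}$ down to $|w|^{2\gamma}$ over $|t-s|^{2\gamma^2}$ comes from working with $\gamma$-H\"older regularity for $\gamma<H$. All of that matches the paper.

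The gap sits exactly at the step you flag as the technical heart. Your plan to ``keep the non-smooth factor $\1_{(X_t-X_s>w)}$ to the very end and absorb the extra weight by H\"older's inequality'' does not work as stated: after the inner step and Cauchy--Schwarz, the quantity multiplying $\delta_{z_1}(X_s)$ contains the $\cf_s$-measurable factor $\EE_s^{1/2}[\1_{(X_t-X_s>w)}]$, and the outer integration by parts requires this factor to lie in the Sobolev spaces $D^{k,p}$ with quantitative norm bounds; a conditional expectation of an indicator carries no such smoothness for free, and H\"older's inequality cannot separate it from $\delta_{z_1}(X_s)$, which is a distribution rather than an $L^p$ random variable. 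The paper's resolution --- the one concrete ingredient missing from your outline --- is to dominate the indicator \emph{pointwise}, before the outer integration by parts, using the Besov functional $\cn_{\gamma+\epsilon,q}(\mathbf{B})=\int_0^1\int_0^1 d(\mathbf{B}_v,\mathbf{B}_u)^q\,|v-u|^{-(\gamma+\epsilon)q}\,du\,dv$. Since $|X_t-X_s|\le C|t-s|^{\gamma}(1+\cn_{\gamma+\epsilon,q}(\mathbf{B})^{1/q})^{1/\gamma}$, one gets $\1_{(|X_t-X_s|>|w|)}\le \exp\{-\lambda|w|^{2\gamma}/(C|t-s|^{2\gamma^2})\}\exp\{\lambda(1+\cn_{\gamma+\epsilon,q}(\mathbf{B})^{1/q})^{2}\}$. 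The deterministic factor exits the expectation and is precisely the claimed decay (this is where $2\gamma$ and $2\gamma^2$ arise), while the remaining random factor is Malliavin smooth (unlike the H\"older norm itself, which is a supremum) and has finite moments for $\lambda$ small by the Friz--Victoir Gaussian tail for $\|\mathbf{B}\|_{0,1;\gamma}$, so the second integration by parts against $\delta_{z_1}(X_s)$ can then be carried out. With this substitution your outline becomes the paper's proof.
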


\begin{proof} The proof is similar to the one in the above, but with a slightly more careful estimate for the tail probability of $X_t-X_s$.

First note the existence and smoothness of the density function $p_{s,t}(z_1, z_2)$  a consequence of Proposition \ref{estimate bivar mall}.
We then write
$$
p_{s,t}(z_1, z_2)=\hat{p}_{s, t-s}(z_1, z_2-z_1), \quad \text{ for } z_1, z_2 \in \R^d,
$$
where $\hat{p}_{s, t-s}(\cdot, \cdot)$ denotes the density of the random vector $(X_s, X_t-X_s)$. We now bound the function $\hat{p}_{s, t-s}$, which shall be expressed as
\begin{align*}
\hat{p}_{s, t-s}(\xi_{1},\xi_{2})
& = \EE\lc \delta_{\xi_1}(X_s) \, \delta_{\xi_2}(X_t-X_s) \,  \rc,
\quad\text{for}\quad
\xi_{1},\xi_{2}\in\R^{d},\\
 & = \EE\lc \delta_{\xi_1}(X_s) \, \EE_{s}\lc \delta_{\xi_2}(X_t-X_s)  \rc \rc.
\end{align*}
As before, we can bound  $M_{st}= \EE_{s}\lc \delta_{\xi_2}(X_t-X_s)  \rc $ as follows.
\begin{equation*}
|M_{s,t}| \le  C \Vert \Gamma_{X_{t}-X_{s},s}^{-1}   \Vert^d_{d, 2^{d+2},s} \, \Vert D(X_{t}-X_{s}) \Vert^d_{d, 2^{d+2},s}  \EE_{s}^{1/2}\lc \1_{(X_t-X_s > \xi_{2})}  \rc.
\end{equation*}
Thus, owing to Lemma \ref{estimate bivar mall} we obtain:
\begin{equation}\label{eq:first-bnd-bivariate}
\hat{p}_{s, t-s}(\xi_{1},\xi_{2}) \le  
\frac{C}{(t-s)^{nH}}   
\EE\lc \delta_{\xi_1}(X_s) \,
\mathbb{E}_s^{\frac{d}{2^{d+1}}}\! (1+ G)^2 \,
\EE_{s}^{1/2}\!\left[ \1_{(X_t-X_s > \xi_{2})} \right] \rc
\end{equation}

To proceed, recall that $\mathbf{B}$ is the lift of $B$ as a rough path. Set
\[
\cn_{\gamma,q}(\mathbf{B})=\int_{0}^{1}\int_{0}^{1}
\frac{d(\mathbf{B}_v,\mathbf{B}_u)^{q}}{|v-u|^{\gamma q}}\,dudv,
\]
where $\gamma <H$ and $q>0$. The reason we need to consider these Besov norms $\cn_{\gamma,q}(\mathbf{B})$ is that they are smooth in the Malliavin sense.
From the Besov-H\"older embedding we have for any $\epsilon>0$ such that $\gamma+\epsilon<H$, and large enough $q$ 
\begin{align*}
\|\mathbf{B}\|_{0,1;\gamma} \le C\cn_{\gamma+\epsilon,q}(\mathbf{B})^{1/q}.
\end{align*}
Hence, it is readily checked by (\ref{eq:bnd-X-pvar}) that (note that $0<t-s<1$ in our case)
\begin{equation*}
|X_t-X_s| \le C\, |t-s|^\gamma 
(1+\cn_{\ga+\epsilon,q}(\mathbf{B})^{1/q})^{1/\gamma}.
\end{equation*}
Furthermore, {by \cite{FV1}}, for any $\gamma<H$ there exists a constant $\lambda>0$ such that 
$$\me \exp\left\{\lambda \|\mathbf{B}\|_{0,1,\gamma}^2 \right\}<\infty.$$
This together with
$$\cn_{\ga+\epsilon,q}(\mathbf{B})^{1/q}\leq \|\mathbf{B}\|_{0,1,\gamma+\epsilon},$$
implies that $\cn_{\gamma+\epsilon,q}(\mathbf{B})^{1/q}$ has Gaussian tail.
Thus, for large enough $q$ and small enough $\lambda$, we have for some constant $C>0$
\begin{equation*}%\label{eq:bnd-cdt-probab}
\EE_{s}\left[ \1_{(X_t-X_s > \xi_{2})} \right] \le 
 C  \exp\left\{-\lambda\frac{|\xi|^{2\gamma}}{C|t-s|^{2\gamma^2}} \right\}\EE_{s}\exp\left\{ \lambda (1+\cn_{\ga+\epsilon,q}(\mathbf{B}))^{2/q}\right\} .
\end{equation*}
Plugging this inequality into \eqref{eq:first-bnd-bivariate}, we end up with:
\begin{equation}\label{eq:bnd-density-conditional}
\hat{p}_{s, t-s}(\xi_{1},\xi_{2}) \le  \frac{C}{(t-s)^{dH}} \exp\left\{-\lambda\frac{|\xi|^{2\gamma}}{C|t-s|^{2\gamma^2}} \right\}  \,
\EE\left[ \delta_{\xi_1}(X_s) \, \Psi_1 \Psi_2  \right]
\end{equation}
where $\Psi_1$ and $\Psi_2$ are two random variables which are smooth in the Malliavin calculus sense. Also note that we may need to choose $\lambda$ even smaller to make sure Malliavin derivatives of $\Phi_2$ have moments to certain large order.  Based on the above consideration,  we can now integrate \eqref{eq:bnd-density-conditional} safely by parts in order to regularize the term $\delta_{\xi_1}(X_s)$, which finishes the proof.

\end{proof}

To close the discussion in this section, let us state a mild lower bound (strict positivity) for the density of $X_t$. We direct interested reader to \cite{BNOT} for its proof. 
\begin{theorem}\label{thm:strict-positivity-intro}
Consider the solution $X$ to equation \eqref{SDE} driven by a $d$-dimensional fractional Brownian motion with Hurst parameter $H>\frac{1}{4}$.  For each fixed $x\in\mr^d$, denote by $p_t(x,\cdot):\R^{d}\to\R_{+}$ the density function of the random variable $X_t$. Assume that Assumption \ref{hyp:elliptic} is satisfied.  Then  $p_t(x, y)>0$ for all $y\in\R^d$.
\end{theorem}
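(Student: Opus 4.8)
The plan is to deduce pointwise positivity of the smooth density from two ingredients: a description of the topological support of the law of $X_t$, and a quantitative lower bound for the density obtained through Malliavin calculus and a Cameron--Martin shift. Recall first that by Lemma \ref{estimate bivar mall} the random vector $X_t$ is non-degenerate in the Malliavin sense, so $p_t(x,\cdot)$ is a smooth function; consequently $\{p_t(x,\cdot)>0\}$ is open and, by continuity, its closure equals the topological support of the law of $X_t$. Hence it suffices to prove two things: that this support is all of $\mr^d$, and that the smooth density has no zeros in the interior of its support. I would carry out the first (support) step by invoking the support theorem for rough differential equations driven by Gaussian rough paths (see \cite{FV-bk}): the support of the law of the solution path coincides with the $p$-variation closure of the skeleton solutions $\{X^h:\,h\in\msh\}$, where $X^h$ solves the controlled ordinary differential equation obtained by replacing $\mathbf{B}$ in \eqref{SDE} by the canonical lift of the Cameron--Martin path $h$. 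Evaluating at time $t$ gives $\mathrm{supp}(\mathrm{law}(X_t))=\overline{\{X^h_t:\,h\in\msh\}}$.

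Here Assumption \ref{hyp:elliptic} enters decisively. Since $V(z)V^{*}(z)\ge \lambda\,\id$ for every $z$, the matrix $V(z)$ has full rank $d$ at each point, so the skeleton system is exactly controllable. Concretely, given any target $y$ I would fix a smooth path $\phi:[0,t]\to\mr^d$ with $\phi(0)=x$ and $\phi(t)=y$, and then solve the algebraic relation $\dot\phi_s=V_0(\phi_s)+V(\phi_s)\dot h_s$ by setting $\dot h_s=V^{*}(\phi_s)(V(\phi_s)V^{*}(\phi_s))^{-1}(\dot\phi_s-V_0(\phi_s))$. The resulting $\dot h$ is smooth and bounded, so $h$ lies in $\msh$ by the description of the Cameron--Martin space through the isometry $\crr$ and the kernel $K_H$, and by construction $X^h=\phi$, whence $X^h_t=y$. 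As $y$ is arbitrary this shows $\mathrm{supp}(\mathrm{law}(X_t))=\mr^d$.

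The delicate step is the passage from ``$y$ lies in the interior of the support'' to ``$p_t(x,y)>0$'', which for $d\ge 2$ is not automatic, since a smooth density can in principle vanish at isolated interior points of its support. To close this gap I would run a Cameron--Martin/Girsanov argument centered at the steering control $h$ just constructed. Writing $p_t(x,y)=\me[\delta_y(X_t)]$ and translating $B$ by $h$ yields $p_t(x,y)=\me[\delta_y(\Phi_t(\mathbf{B}+\mathbf{S}(h)))\,M_h]$, where $M_h=e^{-W(h)-\frac12\|h\|_\msh^2}>0$ is the Cameron--Martin density, $\Phi$ denotes the It\^o--Lyons map, and $\mathbf{B}+\mathbf{S}(h)$ is the rough path obtained by translating $\mathbf{B}$ by $h$; by continuity of $\Phi$ in the rough path topology together with $\Phi_t(\mathbf{S}(h))=y$, the shifted solution concentrates near $y$ when $\mathbf{B}$ is small. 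Combining this localization with the integration-by-parts formula of Proposition \ref{prop:int-parts-cond-W} and the non-degeneracy estimates of Lemma \ref{estimate bivar mall} is what produces, as in \cite{BNOT}, a strictly positive (indeed Gaussian) lower bound on $p_t(x,y)$, of which the present theorem is a corollary. I expect this final step to be the main obstacle: because the fBm-driven solution is not Markovian, one cannot chain short-time local bounds through a semigroup property, so the lower bound must be produced in a single pass from the Malliavin weights. This is precisely where the quantitative estimates of Section 3, together with the exponential tail bound of Proposition \ref{prop:exp-moments-rdes}, are needed to control the integration-by-parts remainders and to prevent the signed weight from destroying positivity.
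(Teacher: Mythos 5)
You should first note that the paper does not actually prove Theorem \ref{thm:strict-positivity-intro}: it explicitly defers the proof to \cite{BNOT}, so the comparison here is with the cited argument. Your strategy --- (i) smoothness of $p_t(x,\cdot)$ from Malliavin non-degeneracy, (ii) identification of the support of the law of $X_t$ with the closure of the skeleton endpoints $\{X^h_t : h\in\msh\}$ via the support theorem for Gaussian rough paths, (iii) exact controllability of the skeleton under Assumption \ref{hyp:elliptic} through the explicit control $\dot h_s=V^{*}(\phi_s)\left(V(\phi_s)V^{*}(\phi_s)\right)^{-1}(\dot\phi_s-V_0(\phi_s))$, and (iv) an upgrade from ``$y$ lies in the support'' to ``$p_t(x,y)>0$'' --- is essentially the route taken in \cite{BNOT}. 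Steps (i)--(iii) are correct as written; for (ii) one should record that the support theorem requires complementary Young regularity between $\msh$ and the rough path $\mathbf{B}$, which holds precisely because $H>\frac14$, and for (iii) that smooth paths vanishing at $0$ do lie in $\msh$ for every $H\in(0,1)$.

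The genuine gap is in step (iv), which you rightly identify as the crux but only sketch. As written, the Cameron--Martin shift plus localization does not yet produce positivity: after inserting a cutoff $\chi$ supported on $\{\|\mathbf{B}\|\le\eps\}$ and bounding the Girsanov weight $M_h$ from below on that event, you are left with showing that the \emph{localized} density of the shifted functional $\Phi_t(\mathbf{B}+\mathbf{S}(h))$ is strictly positive at its central value $y=X^h_t$ --- which is again a positivity-of-density statement, so the argument as stated is circular, and no amount of control on the integration-by-parts remainders by itself rules out cancellation. The standard way to close the loop (and the way the cited reference does) is the positivity criterion for non-degenerate Wiener functionals (see \cite{Nu06}): if $y$ is attained by the skeleton at some $h\in\msh$ \emph{and} the differential of the deterministic It\^o--Lyons map $g\mapsto X^{h+g}_t$ at $g=0$ is surjective onto $\mr^d$, then $p_t(x,y)>0$. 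Under Assumption \ref{hyp:elliptic} this surjectivity is immediate --- perturbing $\dot h$ by $V^{*}(X^h)\left(V(X^h)V^{*}(X^h)\right)^{-1}v$ on a short terminal interval moves the endpoint in the direction $v$ to first order --- and the uniform non-degeneracy of the localized Malliavin matrix needed to apply the criterion is exactly what Lemma \ref{estimate bivar mall} supplies. With that criterion substituted for the heuristic localization, your proof is complete and coincides with the one the paper points to.
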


\section{Fractal dimensions of SDEs driven by fractional Brownian motions}
 We first briefly recall the definition of capacity and packing dimension, as well as their connection to Hausdorff dimension. A kernel $\kappa$ is a measurable function $\kappa: \mr^d\times\mr^d\to [0,\infty]$. For a Borel measure $\mu$ on $\mr^d$, the energy of $\mu$ with respect to the kernel $\kappa$ is defined by
 \begin{align*}
 \ce_\kappa(\mu)=\int_{\mr^d}\int_{\mr^d}\kappa(x,y)\mu(dx)\mu(dy).
 \end{align*}
 For any Borel set $\me\subset\mr^d$, the capacity of $E$ with respect to $\kappa$, denoted by $\cac_\kappa(E)$, is defined by
 $$\cac_\kappa(E)=\left[\inf_{\mu\in\cp(E)}\ce_\kappa(\mu)\right]^{-1}.$$
 Here $\cp(E)$ is the family of probability measures carried by $E$. %and, by convention, $\infty^{-1}=0$. 
 Note that $\cac_\kappa(E)>0$ if and only if there is a probability measure $\mu$ on $E$ with finite $\kappa$-energy.  Throughout our discussion, we will mostly consider the case when $\kappa(x,y)=f(|x-y|),$ where 
 \begin{align*}
 f(r)=\left\{\begin{array}{ll}
 r^{-\alpha},\quad&\mathrm{if}\ \alpha>0;\\
 \log\left(\frac{e}{r\wedge1}\right),\quad &\mathrm{if}\ \alpha=0.
 \end{array}\right.
 \end{align*}
 The corresponding energy and capacity will be denoted by $\ce_\alpha(\mu)$ and $\cac_\alpha(E)$, respectively; and the former will be called the $\alpha$-energy of $\mu$ and the latter will be called the Bessel-Riesz capacity of $E$ of order $\alpha$. The capacity dimension of $E$ is defined by 
$$\dim_c(E)=\sup\{\alpha>0; \cac_\alpha(E)>0\}.$$
It is know by Frostman's theorem (cf. \cite{Kahane:85} or \cite{Khoshnevisan}) that
$$\dim_{\ch}E=\dim_c(E),$$
for every compact subset $E$ of $\mr^d$. Hence, in order to show $\dim_\ch E\geq \alpha$ one only needs to find a measure $\mu$ on $E$ such that the $\alpha$-energy of $\mu$ is finite.

Packing dimension and packing measure were introduced as dual concept to Hausdorff dimension and Hausdorff measure. Denote by $\dim_\cp$ by packing dimension. For any $\varepsilon>0$ and any bounded set $F\subset\mr^d$, let $N(F,\varepsilon)$ be the smallest number of balls of radius $\varepsilon$ (in Euclidean metric) needed to cover $F$. Then the upper box-counting dimension of $F$ is 
\begin{align*}
\overline{\dim}_\cb F=\limsup_{\varepsilon\downarrow0}\frac{\log N(F,\varepsilon)}{-\log\varepsilon}.
\end{align*}
The packing dimension of $F$ can be defined by
\begin{align*}
\dim_\cp F=\inf\left\{\sup_n\overline{\dim}_\cb F_n: f\subset\cup_{n=1}^\infty F_n\right\}.
\end{align*}
It is known that for any bounded set $F\subset\mr^d$ (cf. \cite{Xiao}),
\begin{align}\label{dim relation}
\dim_\ch F\leq \dim_\cp F\leq \overline{\dim}_\cb F\leq d.
\end{align}

\subsection{Hausdorff dimension for SDE driven by fBm}
Recall that $X$ is the solution to
\begin{align}\label{SDE2}
X_t=x+\int_0^tV_0(X_s)ds+\sum_{i=1}^d\int_0^tV_i(X_s)dB^i_s,
\end{align}
where $V_i, i=0,1,...,d$ are $C^\infty$-bounded vector fiefs on $\mr^d$ satisfying Assumption \ref{hyp:elliptic}. Denote by$$X([0,1])=\{X(t): t\in[0,1]\}$$ the collection of sample paths of $X$ on time interval $[0,1]$. 
\begin{theorem}\label{upper bound}
We have
$$\overline{\dim}_\cb X([0,1])\leq \frac{1}{H},\quad\mathrm{a.s.}$$
\end{theorem}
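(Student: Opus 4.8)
The plan is to establish the upper bound on the upper box-counting dimension via a standard covering argument, using the tail estimate for increments of $X$ from Proposition \ref{prop:exp-moments-rdes} to control the oscillation of the sample path on small dyadic intervals. First I would fix a H\"older exponent $\gamma < H$, arbitrarily close to $H$. The goal is to show that almost surely the path $t\mapsto X_t$ is $\gamma$-H\"older continuous on $[0,1]$; once this is known, a ball-counting computation gives $\overline{\dim}_\cb X([0,1]) \le 1/\gamma$, and letting $\gamma \uparrow H$ yields the claimed bound $1/H$.

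The key steps, in order, are as follows. For each $n$, partition $[0,1]$ into $2^n$ dyadic intervals $I_{n,k}=[(k-1)2^{-n}, k2^{-n}]$ for $k=1,\dots,2^n$. On each such interval the increment of $X$ has length $2^{-n}$, so Proposition \ref{prop:exp-moments-rdes} gives, with $\beta := (2H+1)\wedge 2$,
\begin{equation*}
\PP\Big( \sup_{u,v \in I_{n,k}} |X_v - X_u| \ge 2^{-\gamma n} \Big) \le C \exp\Big\{ -\frac{c\, 2^{-\gamma \beta n}}{2^{-2Hn}} \Big\} = C\exp\{-c\, 2^{(2H-\gamma\beta)n}\}.
\end{equation*}
Since $\gamma<H$ and $\beta \le 2$, the exponent $2H - \gamma\beta$ is strictly positive (indeed $2H - \gamma\beta > 2H - H\beta = H(2-\beta)\ge 0$, with strict inequality because $\gamma<H$ forces $2H-\gamma\beta > 2H - H\beta \ge 0$; when $H>1/2$ one has $\beta=2$ and $2H-2\gamma>0$ directly). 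A union bound over the $2^n$ intervals then gives $\PP(\max_k \sup_{I_{n,k}} |X_v-X_u| \ge 2^{-\gamma n}) \le C\, 2^n \exp\{-c\,2^{(2H-\gamma\beta)n}\}$, and because the double-exponential decay dominates the factor $2^n$, this probability is summable in $n$. By Borel--Cantelli, almost surely for all large $n$ the oscillation of $X$ on every $I_{n,k}$ is at most $2^{-\gamma n}$. Consequently $X(I_{n,k})$ is contained in a ball of radius $2^{-\gamma n}$, so the image $X([0,1])$ is covered by $2^n$ balls of radius $\eps_n := 2^{-\gamma n}$. Feeding this into the definition of upper box dimension,
\begin{equation*}
\overline{\dim}_\cb X([0,1]) = \limsup_{\eps \downarrow 0} \frac{\log N(X([0,1]),\eps)}{-\log \eps} \le \limsup_{n\to\infty} \frac{\log 2^n}{-\log 2^{-\gamma n}} = \frac{1}{\gamma},
\end{equation*}
where one checks the limit along the full sequence $\eps\downarrow 0$ by monotonicity of $N(\cdot,\eps)$ interpolating between consecutive scales $\eps_n$. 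Letting $\gamma \uparrow H$ through a countable sequence then yields $\overline{\dim}_\cb X([0,1]) \le 1/H$ almost surely.

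The main obstacle is essentially bookkeeping rather than conceptual: one must ensure the covering bound along the discrete dyadic scales $\eps_n$ upgrades to the continuous $\limsup$ over all $\eps\downarrow 0$, which follows from the monotonicity $N(F,\eps) \le N(F,\eps_n)$ for $\eps_n \le \eps$ and the fact that $\log \eps_n / \log \eps_{n+1} \to 1$. A secondary technical point is to confirm that the positivity of the exponent $2H-\gamma\beta$ holds uniformly for the chosen $\gamma<H$ in both regimes $H\le 1/2$ and $H>1/2$, so that the Borel--Cantelli series genuinely converges; this is immediate from $\beta=(2H+1)\wedge 2$ but should be stated carefully since $\beta$ changes form at $H=1/2$.
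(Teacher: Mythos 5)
Your proof is correct and follows essentially the same covering argument as the paper: both control the diameter of the image of each small interval by the $\gamma$-H\"older modulus of $X$ for $\gamma<H$, count the resulting balls, and let $\gamma\uparrow H$. The only difference is that you re-derive the a.s.\ $\gamma$-H\"older continuity from Proposition \ref{prop:exp-moments-rdes} via Borel--Cantelli, whereas the paper simply invokes the known fact that $\|X\|_{\gamma;[0,1]}$ is a.s.\ finite (indeed has moments of all orders).
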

\begin{proof}
For any constant $0<\gamma<H$, we have
\begin{align}\label{holder}\sup_{s,t\in[0,1]}\frac{|X_t-X_s|}{|s-t|^\gamma}=\|X\|_{\gamma;[0,1]},\end{align}
and $\|X\|_{\gamma;[0,1]}$ has moments to any order. We fix a sample path $w$ and suppress it. For any integer $n\geq2$, we divide $[0,1]$ into $m_n$ sub-intervals $\{R_{n,i}\}$ with length $n^{-1/H}$. Then
$$m_n\leq c_1n^{\frac{1}{H}}$$
and $X([0,1])$ can be covered by $X(R_{n,i}), 1\leq i\leq m_n.$ By (\ref{holder}), we see that the diameter of the image $X(R_{n,i})$ is controlled by
$$\mathrm{diam} X(R_{n,i})\leq c_2n^{-1+\delta},$$
where $\delta=(H-\gamma)/H$. Consequently, for $\varepsilon_n=c_2n^{-1+\delta}$, $X([0,1])$ can be covered by at most $m_n$ balls in $\mr^d$ of radius $\varepsilon_n$. That is 
$$N(X([0,1], \varepsilon_n)\leq c_1n^{\frac{1}{H}}.$$
This implies
$$\overline{\dim}_\cb X([0,1])\leq\frac{1}{1-\delta}\frac{1}{H},\quad \mathrm{a.s.}$$
Letting $\gamma\uparrow H$ finishes the proof.

\end{proof}

Next, we turn to the lower bound.

\begin{theorem}\label{lower bound}
Let $X$ be the solution to equation (\ref{SDE}).  We have
$$\dim_\ch X([0,1])\geq\min\left\{d,\frac{1}{H}\right\}.$$
\end{theorem}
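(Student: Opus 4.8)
The plan is to bound the Hausdorff dimension from below by the energy (capacity) method recalled just before the statement: it suffices to exhibit, for each $\alpha<\min\{d,\frac1H\}$, a random Borel measure $\mu$ carried by the path set whose $\alpha$-energy $\ce_\alpha(\mu)$ is finite almost surely, since Frostman's theorem then gives $\dim_\ch\ge\alpha$. Because $\dim_\ch$ is monotone and $X([0,1])\supset X([\epsilon,1])$, I first fix a small $\epsilon\in(0,1)$ and work on $[\epsilon,1]$, so that the density estimate of Theorem \ref{density increment} is available and $X([\epsilon,1])$ is compact as Frostman requires. The natural candidate is the occupation measure $\mu=X_{*}(\mathrm{Leb}|_{[\epsilon,1]})$, i.e. $\mu(A)=\mathrm{Leb}\{t\in[\epsilon,1]:X_t\in A\}$; normalizing it to a probability measure does not affect finiteness of the energy.

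By Tonelli (the integrand is nonnegative) the expected energy is
\begin{align*}
\me\bigl[\ce_\alpha(\mu)\bigr]=\me\int_\epsilon^1\int_\epsilon^1\frac{ds\,dt}{|X_t-X_s|^\alpha}=\int_\epsilon^1\int_\epsilon^1\me\Bigl[|X_t-X_s|^{-\alpha}\Bigr]\,ds\,dt,
\end{align*}
so everything reduces to the negative-moment estimate $\me[|X_t-X_s|^{-\alpha}]\le C(t-s)^{-\alpha H}$. To get this I would feed the bound of Theorem \ref{density increment} into $\me[|X_t-X_s|^{-\alpha}]=\int_{\mr^d}|z|^{-\alpha}p_{t-s}(z)\,dz$ and rescale via $z=(t-s)^H u$. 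Writing $\beta=(2H+1)\wedge 2\le 2$, the exponent becomes $-(t-s)^{(\beta-2)H}|u|^\beta/C$ with $(\beta-2)H\le 0$, so on $0<t-s\le 1$ the stretched-exponential factor is dominated by $\exp\{-|u|^\beta/C\}$ uniformly in $s,t$, while the prefactor $(t-s)^{-dH}$ is exactly cancelled by the Jacobian $(t-s)^{dH}$. This leaves $C(t-s)^{-\alpha H}\int_{\mr^d}|u|^{-\alpha}\exp\{-|u|^\beta/C\}\,du$, and the remaining integral is finite precisely because $\alpha<d$ makes $|u|^{-\alpha}$ integrable near the origin (the exponential controls the behavior at infinity). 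This is the step where the constraint $\alpha<d$ enters, and it is the only genuinely delicate point—though it is essentially handed to us by Theorem \ref{density increment}.

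Plugging this in gives $\me[\ce_\alpha(\mu)]\le C\int_\epsilon^1\int_\epsilon^1|t-s|^{-\alpha H}\,ds\,dt$, which is finite exactly when $\alpha H<1$, i.e. $\alpha<\frac1H$; this is where the $\frac1H$ constraint enters. Hence for every $\alpha<\min\{d,\frac1H\}$ we have $\me[\ce_\alpha(\mu)]<\infty$, so $\ce_\alpha(\mu)<\infty$ almost surely and Frostman's theorem yields $\dim_\ch X([\epsilon,1])\ge\alpha$ a.s. Letting $\alpha\uparrow\min\{d,\frac1H\}$ along a countable sequence and then invoking $\dim_\ch X([0,1])\ge\dim_\ch X([\epsilon,1])$ gives the claim. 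The answer being the minimum of the two exponents is transparent from the argument: $d$ is the spatial (near-origin) integrability threshold and $\frac1H$ is the temporal one.
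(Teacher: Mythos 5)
Your proposal is correct and follows essentially the same route as the paper: restrict to $[\epsilon,1]$, take the image of Lebesgue measure under $t\mapsto X_t$, bound the expected $\alpha$-energy via the density estimate of Theorem \ref{density increment} and the rescaling $z=(t-s)^H u$ (where $\alpha<d$ controls the origin and $\alpha H<1$ controls the time integral), and conclude by Frostman's theorem. No substantive differences.
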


%For this purpose, we need the following technical lemma.

%\begin{lemma}\label{key est}
%Let $X$ be the solution to equation (\ref{SDE}). We have for any $0<\gamma<n$ and any small $\varepsilon>0$,
%$$\me\left(\frac{1}{|X_t-X_s|^\gamma}\right)\leq \frac{C}{|t-s|^{\gamma (H-\varepsilon)}}$$
%for some constant $C>0.$  
%\end{lemma}
%\begin{proof}
%Denote by $p_{t-s}(\xi)$ the density function of $X_t-X_s$. We have indeed proved the following in our hitting probability paper:
%$$p_{t-s}(\xi)\leq \frac{C}{|t-s|^{nH}}\left(1\wedge\frac{|t-s|^{\beta}}{|\xi|}\right)^q$$
%for any $0<\beta<H$ and $q$ sufficiently large. This is already sufficient (if we make $\beta$ close enough to $H$ and $q$ large enough) to conclude that for any $\varepsilon>0$, we have
%$$\me\left(\frac{1}{|X_t-X_s|^\gamma}\right)\leq \frac{C}{|t-s|^{\gamma (H-\varepsilon)}}.$$
%for any $\gamma<n$.
%\end{proof}

\begin{proof}  Note that for any $\varepsilon>0$, we have $\dim_\ch X([0,1])\geq\dim_\ch X([\varepsilon,1])$. To prove the claimed result, it suffices to show that
$$\dim_\ch X([\varepsilon,1])\geq \gamma\quad \mathrm{a.s.}$$
for every $0<\gamma<\min\{d,1/H\}$.

Let $\mu_X$ be the image of measure of the Lebesgue measure on $[\varepsilon, 1]$ under the mapping $t\mapsto X_t$. Then the energy of $\mu_X$ of order $\gamma$ can be written as
$$\int_{\mr^n}\int_{\mr^n}\frac{\mu_X(dx)\mu_X(dy)}{|x-y|^\gamma}=\int_{[\varepsilon,1]}\int_{[\varepsilon,1]}\frac{dsdt}{|X_t-X_s|^\gamma}.$$
Hence, by Frostman's theorem, it is sufficient to show that for every $0<\gamma<\min\{d,1/H\}$,
\begin{align}\label{energy est}\ce_\gamma=\int_{[\varepsilon,1]}\int_{[\varepsilon,1]}\me\left(\frac{1}{|X_t-X_s|^\gamma}\right)dsdt<\infty.\end{align}
By Theorem \ref{density increment},
\begin{align*}
\me\frac{1}{|X_t-X_s|^\gamma}&\leq \int_{\mr^d}\frac{1}{|z|^\gamma}\frac{C}{|t-s|^{dH}}e^{-\frac{|z|^{(2H+1)\wedge2}}{C|t-s|^{2H}}}dz\\
&=\frac{C}{|t-s|^{\gamma H}}\int_{\mr^d}\frac{1}{|\xi|^\gamma} \exp\left\{-\frac{|\xi|^{(2H+1)\wedge2}}{|t-s|^{C[2H-(2H+1)\wedge2]}}\right\}d\xi\\
&\leq \frac{C'}{|t-s|^{\gamma H}}.
\end{align*}
In the above, we have used the fact that $\gamma<d$ to ensure the integration near $0$ is finite. We have also used that  $2H-(2H+1)\wedge2>0$ to ensure the integration at infinity is finite and uniform in $s, t\in[0,1]$. Now plug the above into (\ref{energy est}) and note that we assume $\gamma<\frac{1}{H}$. The proof is thus completed.
\end{proof}

Theorem \ref{upper bound} and Theorem \ref{lower bound} together with (\ref{dim relation}) give us the  Hausdorff dimension of sample path of $X$.
\begin{theorem}
Let $X$ be the solution to equation (\ref{SDE2}). We have almost surely
$$\dim_\mathcal{H} X([0,1])=\min\left\{d,\frac{1}{H}\right\}.$$
\end{theorem}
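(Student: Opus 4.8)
The plan is to obtain the equality by squeezing $\dim_\ch X([0,1])$ between the two one-sided bounds already in hand, namely Theorem \ref{upper bound} and Theorem \ref{lower bound}, using the general chain of dimension inequalities (\ref{dim relation}).

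First I would assemble the upper bound from two distinct sources. Since $X([0,1])$ is a bounded subset of $\mr^d$, inequality (\ref{dim relation}) gives both $\dim_\ch X([0,1]) \le \overline{\dim}_\cb X([0,1])$ and $\dim_\ch X([0,1]) \le d$ automatically. Feeding Theorem \ref{upper bound}, which states $\overline{\dim}_\cb X([0,1]) \le 1/H$ almost surely, into the first of these yields $\dim_\ch X([0,1]) \le 1/H$ almost surely. Combining the two bounds, I would conclude
$$\dim_\ch X([0,1]) \le \min\left\{d,\frac{1}{H}\right\} \quad \mathrm{a.s.}$$

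Next I would invoke Theorem \ref{lower bound} for the matching lower bound $\dim_\ch X([0,1]) \ge \min\{d, 1/H\}$ almost surely, and the two inequalities together pin the Hausdorff dimension to the common value on an event of full probability, which is exactly the asserted identity. There is no genuine obstacle at this stage: the argument is a purely formal combination of the preceding two theorems with the standard inequalities among Hausdorff, packing, and box-counting dimensions. The only subtlety worth flagging is that the two halves of the upper bound have different origins — the cap at $d$ is forced by the ambient Euclidean dimension through (\ref{dim relation}), whereas the cap at $1/H$ reflects the H\"older regularity of the sample paths exploited in Theorem \ref{upper bound} — so it is precisely their minimum that surfaces, dovetailing with the lower bound coming from the energy estimate in Theorem \ref{lower bound}.
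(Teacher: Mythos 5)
Your proposal is correct and is exactly the paper's argument: the paper derives the theorem by combining Theorem \ref{upper bound} and Theorem \ref{lower bound} through the chain of inequalities (\ref{dim relation}), just as you do. Nothing is missing.
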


It is expected that similar argument also allows us to analyze the Hausdorff dimension of
$\mathrm{Gr}X([0,1])=\{(t,X(t)): t\in[0,1]\},$
the graph of $X$ on $[0,1]$.  The result is summarized in the following theorem.

\begin{theorem}
The following holds regarding the dimension of the graph of $X$.
$$\dim \mathrm{Gr} X\big([0,1]\big)=\min\left\{(1-H)d+1, \frac{1}{H}\right\}.$$
i.e., 
\begin{equation*}
\dim \mathrm{Gr}\big(X[0,1]\big)=\left\{
  \begin{array}{ll}
    \frac{1}{H}, 
&
\hbox{$Hd>1$;}
\\ \\
  (1-H)d+1,
& \hbox{$Hd<1$}

  \end{array}
\right.
\end{equation*}
\end{theorem}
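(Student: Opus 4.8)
The plan is to compute matching upper and lower bounds for the dimension of the graph $\mathrm{Gr}X([0,1])=\{(t,X_t):t\in[0,1]\}\subset\mr^{1+d}$, following the same two-sided strategy used for $X([0,1])$ but keeping track of the extra time coordinate. The key structural feature is that the graph map $t\mapsto(t,X_t)$ has two very different scaling regimes: the time component contributes H\"older exponent $1$ (it is a linear, hence Lipschitz, map in $t$), while the spatial component $X_t$ is only $\gamma$-H\"older for $\gamma<H$. This anisotropy is exactly what produces the $\min\{(1-H)d+1,1/H\}$ answer and the switch at $Hd=1$.

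For the upper bound I would mimic the proof of Theorem \ref{upper bound}. Fix $0<\gamma<H$, set $\delta=(H-\gamma)/H$, and for each large $n$ partition $[0,1]$ into $m_n\leq c_1 n^{1/H}$ subintervals $R_{n,i}$ of length $n^{-1/H}$. Over each $R_{n,i}$ the time coordinate varies by $n^{-1/H}$ and, by the almost sure H\"older bound \eqref{holder}, the spatial image has diameter at most $c_2 n^{-1+\delta}$. The image $\mathrm{Gr}X(R_{n,i})$ therefore sits inside a box of side comparable to $\max\{n^{-1/H},n^{-1+\delta}\}$ in $\mr^{1+d}$. When $H<1$ the dominant (larger) side length is $n^{-1/H}\vee n^{-(1-\delta)}$, and one must cover each such box by smaller balls of the minimal radius; here a standard counting argument shows that covering the time direction at scale $n^{-1+\delta}$ costs an extra factor of $n^{(1-\delta)-1/H}$ worth of balls when $Hd<1$, yielding $N(\mathrm{Gr}X([0,1]),\varepsilon_n)\leq c\,n^{(1-\delta)d+1/H-(1-\delta)}$ for $\varepsilon_n=c_2 n^{-(1-\delta)}$, and the clean bound $N\leq c_1 n^{1/H}$ at scale $n^{-1/H}$ when $Hd>1$. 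Taking $-\log N/\log\varepsilon_n$ and then letting $\gamma\uparrow H$ (so $\delta\downarrow0$) gives $\overline{\dim}_\cb\mathrm{Gr}X([0,1])\leq\min\{(1-H)d+1,1/H\}$ almost surely.

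For the lower bound I would again invoke Frostman's theorem: it suffices, for every $\beta<\min\{(1-H)d+1,1/H\}$, to exhibit a measure on the graph of finite $\beta$-energy. Pushing Lebesgue measure on $[\epsilon,1]$ forward under $t\mapsto(t,X_t)$ reduces the energy to
\begin{equation*}
\ce_\beta=\int_{[\epsilon,1]}\int_{[\epsilon,1]}\me\left(\frac{1}{\big(|t-s|^2+|X_t-X_s|^2\big)^{\beta/2}}\right)\,ds\,dt.
\end{equation*}
Using the density bound of Theorem \ref{density increment} to integrate out $X_t-X_s$, one estimates the inner expectation by splitting the region of integration according to whether $|z|\lesssim|t-s|$ or $|z|\gtrsim|t-s|$. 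In the regime $Hd<1$ the spatial fluctuation dominates and the computation parallels the one in Theorem \ref{lower bound}, producing an integrable singularity of order $|t-s|^{-H\beta+\text{(gain from time)}}$; in the regime $Hd>1$ the time coordinate dominates and one recovers the bound governed by $1/H$. In each case the exponent stays below $1$ precisely because $\beta<\min\{(1-H)d+1,1/H\}$, so $\ce_\beta<\infty$ and $\dim_\ch\mathrm{Gr}X([0,1])\geq\beta$.

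The main obstacle I anticipate is the careful bookkeeping of the two competing scales in both halves. For the upper bound the subtlety is that the covering must be done at the smaller of the two side lengths, so one cannot simply reuse the $m_n$ balls from Theorem \ref{upper bound}; one has to count how many balls of radius $n^{-(1-\delta)}$ are needed to tile a box that is long ($n^{-1/H}$) in the time direction, and this is exactly where the two cases $Hd\gtrless1$ separate and where the $\delta\to0$ limit must be taken uniformly. For the lower bound the corresponding difficulty is performing the $z$-integral against the Gaussian-type density while tracking the $|t-s|$-dependence through both the prefactor $(t-s)^{-dH}$ and the combined denominator $(|t-s|^2+|z|^2)^{\beta/2}$, and then verifying that the resulting single integral in $|t-s|$ over $[\epsilon,1]$ converges for all $\beta$ strictly below the claimed critical value. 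Everything else is a routine adaptation of the arguments already carried out for $X([0,1])$.
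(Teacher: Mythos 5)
Your overall strategy --- box-counting over the product boxes $R_{n,i}\times X(R_{n,i})$ for the upper bound, and a Frostman energy integral on the graph evaluated with the increment density estimate for the lower bound --- is exactly the paper's. The lower-bound half of your plan is sound: splitting the $z$-integral at $|z|\sim|t-s|$ gives $\me\big[(|t-s|^2+|X_t-X_s|^2)^{-\beta/2}\big]\lesssim |t-s|^{-\beta H}$ when $\beta<d$ and $\lesssim|t-s|^{(1-H)d-\beta}$ when $\beta\ge d$, both integrable over $[\epsilon,1]^2$ for $\beta<\min\{(1-H)d+1,1/H\}$. (The paper shortcuts the case $Hd>1$ via $\dim \mathrm{Gr}X([0,1])\ge\dim_\ch X([0,1])=1/H$ and only computes the energy when $Hd<1$, but your version works as well.)

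The upper-bound counting, however, is wrong as written, and the error comes from having the anisotropy backwards. Since $1/H>1>1-\delta$, each box $R_{n,i}\times X(R_{n,i})$ is \emph{short} in the time direction (side $n^{-1/H}$) and \emph{long} in the $d$ spatial directions (side $\asymp n^{-(1-\delta)}$); it is not ``long in the time direction.'' The bound $1/H$ comes from covering at the larger radius $n^{-(1-\delta)}$, where each box costs $O(1)$ balls and $N\le c n^{1/H}$; the bound $(1-H)d+1$ comes from covering at the smaller radius $n^{-1/H}$, where the extra cost per box is $\big(n^{-(1-\delta)}/n^{-1/H}\big)^d=n^{(1/H-1+\delta)d}$ balls obtained by subdividing the \emph{spatial} directions, not the time direction. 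Your claimed count $N\le c\,n^{(1-\delta)d+1/H-(1-\delta)}$ at radius $\varepsilon_n=n^{-(1-\delta)}$ yields, after dividing by $-\log\varepsilon_n=(1-\delta)\log n$ and letting $\delta\downarrow 0$, the exponent $d+1/H-1$; since $d+1/H-1-\big((1-H)d+1\big)=Hd+1/H-2\ge 0$ and $d+1/H-1-1/H=d-1\ge 0$, this bound is never below $\min\{(1-H)d+1,1/H\}$, so it does not match the lower bound and the proof does not close. The correct count, $N\big(\mathrm{Gr}X([0,1]),n^{-1/H}\big)\le c\,n^{1/H+(1/H-1+\delta)d}$, gives $\overline{\dim}_\cb \mathrm{Gr}X([0,1])\le 1+(1-H)d+H\delta d$, and letting $\delta\downarrow 0$ finishes, as in the paper.
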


\begin{proof}
We first prove the upper bound of $\dim \mathrm{Gr}\big(X[0,1]\big)$.
Given any $\delta>0$, we can cover $\mathrm{Gr} X\big([0,1]\big)$ by $m_n$ many balls in $\IR^{1+d}$ with radius $n^{-1+\delta}$, where $m_n\le cn^{1/H}$, which provides an upper bound $1/H$ for $\dim \mathrm{Gr}\big(X[0,1]\big)$. To show the other upper bound $(1-H)d+1$, we notice that as in the proof to Theorem \ref{upper bound}, each $R_{n,i}\times X_{R_{n,i}}$ can be covered by $l_n$ balls with radius $n^{-1/H}$, where
\begin{equation*}
l_n=n^{\left(\frac{1}{H}+(-1+\delta)\right)d+1+\frac{1}{H}}.
\end{equation*}
Therefore $\mathrm{Gr}X([0,1])$ can be covered by $m_n\times l_n$ balls with radius $n^{-1/H}$. Letting $\delta>0$ go to zero proves that 
\begin{equation*}
\dim \mathrm{Gr}X([0,1])\le H\left[\left(\frac{1}{H}-1\right)d+\frac{1}{H}\right]=1+(1-H)d.
\end{equation*}

Now we need to prove the lower bound for $\dim \mathrm{Gr}X([0,1])$. Since $\dim \mathrm{Gr}X\ge \dim_\mathcal{H}X$ always holds, when $(1-H)d+1>1/H$, or equivalently, $Hd>1$, $\dim \mathrm{Gr}X\ge \dim_\mathcal{H}X=1/H$. Therefore we only need to worry about the case when  $(1-H)d+1<1/H$, or equivalently, $Hd<1$. For $\gamma$ such that $d<1/H<\gamma<1+(1-H)d$ arbitrarily fixed, we will prove $\dim \mathrm{Gr}X([0,1])$ a.s.. By Frostman's theorem, we will show
\begin{equation}\label{1210630}
\int_{t=0}^1\int_{s=0}^1\IE\left[\frac{1}{\left(|t-s|^2+|X_t-X_s|^2\right)^{\gamma/2}}\right]dsdt<\infty.
\end{equation}
By the upper bound density estimate for $X_t-X_s$, we have for the left hand side of \eqref{1210630} that
\begin{align*}
&\quad \int_{t=0}^1\int_{s=0}^1\IE\left[\frac{1}{\left(|t-s|^2+|X_t-X_s|^2\right)^{\gamma/2}}\right]dsdt
\\
&\le \int_{t=0}^1\int_{s=0}^1\int_{X_t-X_s:=y\in \IR^d} \frac{1}{|t-s|^{dH}}e^{-\frac{|y|^{(2H+1)\wedge 2}}{|t-s|^{2H}}}\left(\frac{1}{(|t-s|^2+|y|^2)}\right)^{\gamma/2}dydsdt
\\
&\le \int_{t\in [0,1]}\int_{s\in [0,1]}\int_{y\in \IR^d}\frac{1}{|t-s|^{dH+\gamma}}e^{-\frac{|y|^{(2H+1)\wedge 2}}{|t-s|^{2H}}}dydsdt
\\
&\le \int_{t\in [0,1]}\int_{s\in [0,1]}\frac{1}{|t-s|^{1-\delta}}\int_{r\in [0,1]}\frac{r^{d-1}}{|t-s|^d}e^{-\frac{r^{(2H+1)\wedge 2}}{|t-s|^{2H}}}drdsdt
\\
&\asymp \int_{t\in [0,1]}\int_{s\in [0,1]}\frac{1}{|t-s|^{1-\delta}}\int_{r\in [0,1]}e^{-\frac{r^{(2H+1)\wedge 2}}{|t-s|^{2H}}}d\left(\frac{r}{|t-s|}\right)^ddsdt<\infty
\end{align*}
for any $\delta>0$ such that $\gamma+dH<1-\delta+d$. The integral converges since $2H<(2H+1)\wedge 2$. Therefore it has been shown that when $(1-H)d+1<1/H$, i.e, when $Hd<1$, $\dim \mathrm{Gr}X([0,1])\ge (1-H)d+1$. This proves that
\begin{equation*}
\dim \mathrm{Gr}X([0,1])\ge \min\left\{(1-H)d+1, \frac{1}{H}\right\}.
\end{equation*}
\end{proof}

\subsection{Hausdorff dimension of level sets}

In this section, we study Hausdorff dimension of the level sets of processes driven by fBM. Before we state the main results, we have the following two lemmas regarding the upper and lower bounds of Hausdorff dimensions of the level sets which actually matches each other. The following lemma addresses the upper bound. 
\begin{lemma}\label{124935}
Let $X=\{X(t), t\in \IR\}$ be a process driven by fBM on $t\in [\epsilon, 1]$.
\begin{equation}\label{124730}
\overline{\dim}_{\mathcal{B}}L_x\le 1-dH, \quad a.s. 
\end{equation}
and $L_x=\varnothing$ when the right hand side of \eqref{124730} is negative. 
\end{lemma}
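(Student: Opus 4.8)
The plan is to bound the box-counting dimension by a first-moment (covering) argument and then upgrade the resulting expectation estimate to an almost-sure statement. First I would partition $[\epsilon,1]$ into $m_n\le c\,n$ subintervals $I_{n,i}$ of length $1/n$, with left endpoints $t_i$. If $L_x\cap I_{n,i}\neq\varnothing$, then this piece of $L_x$ is covered by a single ball of radius $1/n$, so the covering number satisfies $N(L_x,1/n)\le N_n$, where $N_n=\#\{i:\,L_x\cap I_{n,i}\neq\varnothing\}$. It therefore suffices to control $N_n$.

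The key observation is that a hit $X_\tau=x$ with $\tau\in I_{n,i}$ forces $|X_{t_i}-x|=|X_{t_i}-X_\tau|\le \sup_{u,v\in I_{n,i}}|X_u-X_v|$. Fixing $\gamma<H$, I would split according to whether this oscillation is below $n^{-\gamma}$ or not:
\begin{equation*}
\PP\big(L_x\cap I_{n,i}\neq\varnothing\big)\le \PP\big(|X_{t_i}-x|\le n^{-\gamma}\big)+\PP\Big(\sup_{u,v\in I_{n,i}}|X_u-X_v|> n^{-\gamma}\Big).
\end{equation*}
The first term is bounded by $C\,n^{-\gamma d}$ using the uniform-in-$t_i$ upper bound on the density of $X_{t_i}$ over $t_i\in[\epsilon,1]$ (available from the density estimates in \cite{BNOT}). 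The second term is estimated by Proposition \ref{prop:exp-moments-rdes} with $t-s=1/n$ and $\xi=n^{-\gamma}$, giving $C\exp\{-c\,n^{2H-\gamma((2H+1)\wedge 2)}\}$; since $\gamma<H$ makes the exponent $2H-\gamma((2H+1)\wedge 2)$ strictly positive in both regimes $H\le\tfrac12$ and $H>\tfrac12$, this term is super-polynomially small. Summing over the $m_n\le c\,n$ intervals yields $\me[N_n]\le C\,n^{1-\gamma d}$.

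With this estimate in hand, both conclusions follow. For the emptiness claim, suppose $1-dH<0$, i.e. $dH>1$; then I can choose $\gamma\in(1/d,H)$, so that $1-\gamma d<0$ and $\me[N_n]\le C\,n^{1-\gamma d}\to 0$. Since $\{L_x\neq\varnothing\}\subseteq\{N_n\ge1\}$ for every $n$, we get $\PP(L_x\neq\varnothing)\le\me[N_n]\to0$, whence $L_x=\varnothing$ almost surely. For the dimension bound I would pass to an almost-sure statement along the dyadic scales $n=2^k$: for fixed $\gamma<H$ and $\eta>0$, Markov's inequality gives $\PP\big(N_{2^k}>2^{k(1-\gamma d+\eta)}\big)\le C\,2^{-k\eta}$, which is summable in $k$, so by Borel--Cantelli almost surely $N_{2^k}\le 2^{k(1-\gamma d+\eta)}$ for all large $k$. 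Interpolating a general $\epsilon\in[2^{-(k+1)},2^{-k})$ between consecutive dyadic scales via the monotonicity of $\epsilon\mapsto N(L_x,\epsilon)$ then gives $\overline{\dim}_{\mathcal{B}}L_x\le 1-\gamma d+\eta$ almost surely; letting $\eta\downarrow0$ and $\gamma\uparrow H$ finishes the proof.

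The hard part is precisely the passage from the expectation bound to the almost-sure box-dimension bound: a direct Markov estimate at every integer scale $n$ only yields $\PP(N_n>n^{1-\gamma d+\eta})\le C\,n^{-\eta}$, which is not summable for $\eta\le1$, so one must restrict to a dyadic subsequence (where $\sum_k 2^{-k\eta}<\infty$) and recover the intermediate scales through monotonicity. A secondary point requiring care is verifying that $\gamma<H$ always makes the oscillation-tail exponent positive, so that the large-oscillation contribution stays negligible relative to $n^{1-\gamma d}$, and ensuring the marginal density bound for $X_{t_i}$ is uniform on $[\epsilon,1]$.
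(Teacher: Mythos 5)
Your proposal is correct and follows essentially the same route as the paper's proof: a covering of $[\epsilon,1]$ by small intervals, the split of $\PP(L_x\cap I_{n,i}\neq\varnothing)$ according to the size of the oscillation (bounding the first term via the marginal density and the second via the tail estimate of Proposition \ref{prop:exp-moments-rdes}), a first-moment bound on the covering number, and an upgrade to an almost-sure statement via Chebyshev and Borel--Cantelli along dyadic scales. The only difference is a harmless reparametrization (intervals of length $1/n$ with threshold $n^{-\gamma}$, $\gamma<H$, versus the paper's length $n^{-1/H}$ with threshold $n^{-(1-\delta)}$), and your explicit monotonicity interpolation between dyadic scales is a point the paper leaves implicit.
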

\begin{proof}
For a fixed integer $n\ge 1$, we divide the interval $[0, 1]$ into $m_n$ subintervals $I_i$, $1\le i\le m_n$ with lengths $n^{-1/H}$. $m_n\le cn^{1/H}$ for some $c>0$. Let $0<\delta<1$ be fixed, and let $t_i$ be the left endpoint of $I_i$. In the following proof, $c$ is always some strictly positive constant which may change from line to line. We have for $1\le i\le m_n$
\begin{align}\label{930307}
\IP\left( x\in X(I_i)\right)&\le \IP\left(\max_{s,t\in I_i}\left|X_s-X_t\right|\le n^{-(1-\delta)}; x\in X(I_i)\right)+\IP\left(\max_{s,t\in I_i}\left|X_s-X_t\right|>n^{-(1-\delta)}\right).
\end{align}
We first claim that the first term on the right hand side of \eqref{930307} is bounded by $cn^{-(1-\delta)}$. Assuming the process is started at $z$ and applying Theorem \ref{density bivariate}, we have for $1\le i\le m_n$, 
\begin{align}
\IP\left(\max_{s,t\in I_i}\left|X_s-X_t\right|\le n^{-(1-\delta)}; x\in X(I_i)\right)&\le \IP\left(\left| X_{t_i}-x\right|\le n^{-(1-\delta)}\right) \nonumber
\\
&\le \int_{y\in B(x, n^{-(1-\delta)})} \frac{c}{t_i^{dH}}e^{-\frac{|y-z|^{(2H+1)\wedge 2}}{ct_i^{2H}}}dy \nonumber
\\
&\le \int_{y\in B(x, n^{-(1-\delta)})} \frac{c}{t_i^{dH}}e^{-\frac{|y-x|^{(2H+1)\wedge 2}}{ct_i^{2H}}}dy \nonumber
\\
&\le c\int_{r=0}^{n^{-(1-\delta)}}\frac{r^{d-1}}{t_i^{dH}}e^{-\frac{r^{2\wedge (2H+1)}}{ct_i^{2H}}}dr \le cn^{-(1-\delta)d}, \nonumber
\end{align}
where the last inequality is due to the fact that $t_i$ is bounded away from zero by at least $\epsilon$.

For the second term on the right hand side of \eqref{930307}, we would like to bound it from above by $e^{-cn^{2\delta}}$, which will then bound the left hand side of \eqref{930307} by $cn^{-(1-\delta)d}$. Indeed, it has been shown 
\begin{equation}\label{930353}
\IP\left(\sup_{s\in [0,t]}\left|X^x_t-x\right|\ge \xi\right)\le c\exp\left[-\frac{c_H\xi^{(2H+1)\wedge 2}}{t^{2H}}\right], \quad \text{for }t\in [0,1].
\end{equation}
It follows from \eqref{930353} that for $1\le i\le m_n$,
\begin{align*}
\IP\left[\max_{s,t\in I_i}\left|X_s-X_t\right|>n^{-(1-\delta)}\right]&\le ce^{-\frac{cn^{-(1-\delta)[(2H+1)\wedge 2]}}{n^{-\frac{1}{H}(2H)}}} \le e^{-c\left(\frac{n^{-2(1-\delta)}}{n^{-2}}\right)} =e^{-cn^{2\delta}}.
\end{align*}
Therefore, we have shown that
\begin{align}\label{102317}
\nonumber \IP\left( x\in X(I_i)\right)&\le \IP\left(\max_{s,t\in I_i}\left|X_s-X_t\right|\le n^{-(1-\delta)}; x\in X(I_i)\right)+\IP\left(\max_{s,t\in I_i}\left|X_s-X_t\right|>n^{-(1-\delta)}\right)
\\
&\le cn^{-(1-\delta)d}+e^{-cn^{2\delta}}\le cn^{-(1-\delta)d}, \quad 1\le i\le m_n
\end{align}
for sufficiently large $n$.

If $dH>1$, we choose $\delta>0$ such that $(1-\delta)d>1/H$. We denote by $N_n$ the number of times that $X(I_i)$ visits $x$, as $i$ increases from $1$ to $m_n$. It now follows from \eqref{102317} and the fact that $m_n\le cn^{1/H}$ 
\begin{equation}\label{102321}
\IE(N_n)\le cn^{1/H}\cdot n^{-(1-\delta)d}\rightarrow 0, \quad \textrm{as }n\rightarrow \infty,
\end{equation}
which combined with Fatou's lemma yields that $N_n=0$ for infinitely many $n$, since the random variables $N_n$ are integer-valued. Therefore $L_x=\varnothing$ a.s..

If $dH<1$, a covering $\displaystyle{\left\{\widetilde{I}_i\right\}_{i=1}^{m_n}}$ of $L_x$ can be defined by setting $\widetilde{I}_i=I_i$ if $x\in X(I_i)$, and $\widetilde{I}_i=\varnothing $ otherwise. Each interval $I_i$ has length $n^{-1/H}$. Let the number of such nonempty $\widetilde{I}_i$ be denoted by $M_n$. In the same way as getting \eqref{102321}, again by \eqref{102317} we have
\begin{align}\label{103343}
\IE(M_n) \le cn^{-1/H} n^{-(1-\delta)d}=cn^{1/H-(1-\delta)d}.
\end{align}
By picking $\delta>0$ sufficiently small, we may define a constant $\eta$ by 
\begin{equation*}
\eta=1/H-(1-2\delta)d>0.
\end{equation*}
We consider the sequence of integers $n_j=2^j, \; j\ge 1$. \eqref{103343} and Chebyshev's inequality imply 
\begin{equation*}
\IP\left(M_{n_j}\ge \epsilon \right)\le \frac{c}{\epsilon}\; 2^{j\left(\frac{1}{H}-(1-\delta)d\right)}, \quad \forall \epsilon>0.
\end{equation*}
Borell-Cantelli lemma yields almost surely $M_{n_j}\le cn_j^\eta$ for all $j$ large enough. This implies that $\overline{\dim}_\mathcal{B}L_x\le H\eta$ almost surely. Letting $\delta\downarrow 0$ along rational numbers proves
\begin{equation*}
\overline{\dim}_{\mathcal{B}}L_x\le 1-Hd\quad a.s.,
\end{equation*}
which proves the upper bound.
\end{proof}

The next lemma is on the lower bound of Hausdorff dimension of $L_x$. 
\begin{lemma}\label{124936}
There exists some $C>0$ such that
\begin{equation*}
\IP\{\dim_{\mathcal{H}}L_x\ge 1-Hd\}\ge C.
\end{equation*}
\end{lemma}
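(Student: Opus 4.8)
The plan is to combine the second moment method with Frostman's theorem: I will build, with probability bounded below by a fixed constant, a nontrivial random measure carried by $L_x$ whose $\gamma$-energy is finite for every $\gamma<1-dH$. Denote by $x_0$ the (fixed) starting point of $X$ and keep $x$ for the level. For a smoothing parameter $\varepsilon\in(0,1)$ let
\begin{equation*}
f_\varepsilon(y)=(2\pi\varepsilon)^{-d/2}\exp\left(-\frac{|y|^2}{2\varepsilon}\right),\qquad \mu_\varepsilon(dt)=f_\varepsilon(X_t-x)\,dt,\quad t\in[\epsilon,1].
\end{equation*}
Since $f_\varepsilon$ is an approximate identity, $\mu_\varepsilon$ concentrates on the times where $X_t$ is close to $x$, so any limit of $\mu_\varepsilon$ should be supported on $L_x$; the task is to make this rigorous while keeping both the total mass and the energy under control.

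First I would estimate the moments of the mass, uniformly in $\varepsilon$. Writing $p_t(\cdot)$ for the one--point density of $X_t$, we have $\me[f_\varepsilon(X_t-x)]=\int_{\mr^d}f_\varepsilon(y-x)p_t(y)\,dy\to p_t(x)$ as $\varepsilon\downarrow0$, and Theorem \ref{thm:strict-positivity-intro} gives $p_t(x)>0$ for $t\in[\epsilon,1]$, whence $\me[\mu_\varepsilon([\epsilon,1])]\ge c_1>0$ for all small $\varepsilon$. For the second moment and the energy I would use that $\int f_\varepsilon=1$ together with the bivariate estimate of Theorem \ref{density bivariate}, which at $z_1=z_2$ reads $p_{s,t}(z_1,z_1)\le C|t-s|^{-dH}$; this yields
\begin{equation*}
\me\lc f_\varepsilon(X_s-x)f_\varepsilon(X_t-x)\rc=\int\!\!\int f_\varepsilon(y_1-x)f_\varepsilon(y_2-x)p_{s,t}(y_1,y_2)\,dy_1\,dy_2\le\frac{C}{|t-s|^{dH}}.
\end{equation*}
Because $dH<1$ we get $\sup_\varepsilon\me[\mu_\varepsilon([\epsilon,1])^2]\le C\int_\epsilon^1\!\int_\epsilon^1|t-s|^{-dH}\,ds\,dt=:c_2<\infty$, and for any $\gamma<1-dH$ the same bound gives $\sup_\varepsilon\me[\ce_\gamma(\mu_\varepsilon)]\le C\int_\epsilon^1\!\int_\epsilon^1|t-s|^{-\gamma-dH}\,ds\,dt=:c_3(\gamma)<\infty$, the integral converging exactly because $\gamma+dH<1$.

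Next I would pass to the limit. Using the same domination by $C|t-s|^{-dH}$ (integrable since $dH<1$) together with continuity of $p_{s,t}$, the cross moments $\me[\int g\,d\mu_\varepsilon\cdot\int g\,d\mu_{\varepsilon'}]$ all converge, as $\varepsilon,\varepsilon'\downarrow0$, to $\int\!\int g(s)g(t)p_{s,t}(x,x)\,ds\,dt$ for continuous $g$; hence $\{\mu_\varepsilon\}$ is $L^2(\Omega)$--Cauchy when tested against continuous functions and converges to a random measure $\mu$ on $[\epsilon,1]$ with $\me\|\mu\|\ge c_1$ and $\me\|\mu\|^2\le c_2$. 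The Paley--Zygmund inequality then gives $\IP(\|\mu\|>0)\ge\IP(\|\mu\|\ge c_1/2)\ge c_1^2/(4c_2)=:C>0$, a bound that does not involve $\gamma$. Since $f_\varepsilon(X_t-x)\to0$ off $L_x$, the limit $\mu$ is carried by $L_x$; and by Fatou with lower semicontinuity of the energy, $\me[\ce_\gamma(\mu)]\le c_3(\gamma)<\infty$, so $\ce_\gamma(\mu)<\infty$ almost surely for each rational $\gamma<1-dH$. On the event $\{\|\mu\|>0\}$ the normalized measure $\mu/\|\mu\|$ is then a probability measure on $L_x$ of finite $\gamma$-energy, so $\cac_\gamma(L_x)>0$ and Frostman's theorem gives $\dim_\ch L_x\ge\gamma$; intersecting over rational $\gamma\uparrow1-dH$ yields $\dim_\ch L_x\ge1-dH$ a.s.\ on $\{\|\mu\|>0\}$, whence $\IP(\dim_\ch L_x\ge1-dH)\ge C$.

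I expect the main obstacle to be precisely this limit passage: proving that the smoothed occupation measures $\mu_\varepsilon$ genuinely converge to a measure $\mu$ supported on $L_x$ (rather than on a shrinking neighborhood of it) while simultaneously preserving the lower bound on the mass and the finiteness of the $\gamma$-energy. The crucial structural point making the final step clean is that the limiting measure $\mu$ is constructed once and for all, independently of $\gamma$, so the positive probability $C=c_1^2/(4c_2)$ does not degenerate as $\gamma\uparrow1-dH$; by contrast, the moment computations themselves are routine consequences of Theorem \ref{density bivariate} and the strict positivity in Theorem \ref{thm:strict-positivity-intro}.
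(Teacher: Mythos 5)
Your proposal is correct and follows essentially the same route as the paper: the same family of Gaussian-smoothed occupation measures (your $\mu_\varepsilon$ is the paper's $\mu_n$ with $\varepsilon=1/n$), the same three moment/energy bounds derived from Theorem \ref{thm:strict-positivity-intro} and Theorem \ref{density bivariate}, and the same Paley--Zygmund/Frostman conclusion with a constant independent of $\gamma$. The only difference is that you sketch the $L^2$-Cauchy limit passage by hand, whereas the paper delegates exactly that step to the lemma of Kahane/Testard.
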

\begin{proof}
To prove the lower bound for $\dim_{\mathcal{H}}L_x$, we fix a small constant $\delta>0$ such that 
\begin{equation*}
\gamma :=1-(1+\delta)Hd>0.
\end{equation*}
Note that if we can prove that there is a constant $c>0$ independent of $\delta$ such that
\begin{equation}\label{103313}
\IP\left\{ \dim_{\mathcal{H}}L_x\ge \gamma \right\}\ge c,
\end{equation}
then the lower bound will follow by letting $\delta\downarrow 0$. The strategy to prove \eqref{103313} is standard and based on the capacity argument by Kahane \cite{Kahane:85}. We spell out all the details for the convenience of the readers. 

Let $\mathcal{M}_\gamma^+$ be the space of all non-negative measures on $\IR$ with finite $\gamma$-energy. It is known that $\mathcal{M}_\gamma^+$ is a complete metric space under the metric $\|\cdot \|_\gamma$ given by 
\begin{equation*}
\|\mu\|_\gamma^2 =\int_{\IR}\int_{\IR}\frac{\mu(dt)\mu(ds)}{|t-s|^\gamma}. 
\end{equation*}
We define a sequence of random positive measures $\mu_n:=\mu_n(x, \cdot)$ on the Borel sets $C$ of $[\epsilon, 1]$ by
\begin{equation}\label{103355}
\mu_n(C)=\int_C (2\pi n)^{d/2} \exp\left(-\frac{n|X_t-x|^2}{2}\right)dt. 
\end{equation}
It follows from Kahane \cite{Kahane:85} or Testard \cite{Testard} that if there exist positive constants $c_i$,$1\le i\le 3$ such that
\begin{equation}\label{103410}
\IE\left(\|\mu_n\|\right)\ge c_1, \quad \IE\left(\|\mu_n\|^2\right)\le c_2, \quad \IE\left(\|\mu_n\|_\gamma\right)\le c_3 \quad \text{ for all }n., 
\end{equation}
where $\|\mu_n\|=\mu_n([\epsilon, 1])$ denotes the total mass of $\mu_n$, then there is a subsequence of $\{\,u_n\}$, say $\{\mu_{n_k}\}$, such that $\mu_{n_k}\rightarrow \mu\in \mathcal{M}_\gamma^+$, and $\mu$ is strictly positive with probability at least $c_1^2/(2c_2)$. It then follows from \eqref{103355} that $\mu$ has its support in $L_x$ almost surely. Moreover, the third inequality of \eqref{103355} together with monotone convergence theorem imply that the $\gamma-$energy of $\mu$ is finite. Therefore Frostman's theorem yields \eqref{103313} with $c=c_1^2/(2c_2). $ It remains to verify the three inequalities in \eqref{103410}. 

In the following computation, the strictly positive constant $c$ may change from line to line. 
\begin{align}
\IE\left(\|\mu_n\|\right)&=\IE\int_{[\epsilon, 1]}(2\pi n)^{d/2} \exp \left[-\frac{n\left|X_t-x\right|^2}{2}\right]dt \nonumber 
\\ 
&=\int_{[\epsilon, 1]}(2\pi n)^{d/2}\IE\exp \left[-\frac{n\left|X_t-x\right|^2}{2}\right]dt \nonumber
\\
&\ge \int_{[\epsilon, 1]} cn^{d/2}\int_{y\in \IR^d}e^{-\frac{n|y-x|^2}{2}}p(t,x,y)dydt \nonumber 
\\
&\ge \int_{[\epsilon, 1]} cn^{d/2}\int_{|y-x|<1/n}e^{-\frac{n|y-x|^2}{2}}p(t,x,y)dydt \nonumber 
\\
&\ge \int_{[\epsilon, 1]} cn^{d/2}\int_{ |z-x|<1}p(t,x,\frac{z-x}{\sqrt{n}}+x)\frac{1}{(\sqrt{n})^d}dzdt >c>0. \nonumber
\end{align}
In the last line above we have performed a change of variable $\sqrt{n}(y-x)=z-x$ and restricted the integration over $z$ in a ball $|z-x|<1$. Then the desired low bound clearly follows by the fact that $p(t,x,y)$ is globally strictly positive (Theorem \ref{thm:strict-positivity-intro}).

For the second inequality of \eqref{103410}, by applying the transition density estimate in Theorem \ref{density bivariate}, we get
\begin{align}
&\,\IE\left(\|\mu_n\|^2\right)=\IE \left[ \int_{t\in [\epsilon, 1]}(2\pi n)^{d/2}\exp \left(-\frac{n\left|X_t-x\right|^2}{2}\right)dt \int_{s\in [\epsilon, 1]}(2\pi n)^{d/2} \exp \left(-\frac{n \left|X_s-x\right|^2}{2}\right)ds\right] \nonumber 
%\\
%&=(2\pi n)^d \cdot \IE\left[\int_{t\in [\epsilon, 1]}\exp\left(-\frac{n\left|X_t-x\right|^2}{2}\right)dt \int_{s\in %[\epsilon, 1]}\exp\left(-\frac{n\left|X_s-x\right|^2}{2}\right)ds\right] \nonumber
\\
&=2\cdot (2\pi n)^d \cdot \IE\left[\int_{t=\epsilon}^1\int_{s=\epsilon}^t\exp \left(-\frac{n\left(\left|X_t-x\right|^2+\left|X_s-x\right|^2\right)}{2}\right)^2dsdt\right] \nonumber
%\\
%&\le c n^d \int_{z\in \IR^d}\int_{y\in \IR^d}\int_{t=\epsilon}^1\int_{s=\epsilon}^t \exp\left[-\frac{n\left(|x-y|%^2+|z-x|^2\right)}{2}\right]\frac{c}{|t-s|^{Hd}}\exp\left\{-\lambda\frac{|z_1-z_2|^{2\gamma}}{C|t-s|%^{2\gamma^2}} \right\} dsdtdydz\nonumber
\\
&\le c n^d \int_{z\in \IR^d}\int_{y\in \IR^d}\int_{t=\epsilon}^1\int_{s=\epsilon}^1\exp\left(-\frac{n\left(\left|y-x\right|^2+\left|z-x\right|^2\right)}{2}\right)\frac{1}{|t-s|^{Hd}}dsdtdydz \nonumber
\\
&\le cn^d \int_{z\in \IR^d}\int_{y\in \IR^d}\int_{t=\epsilon}^1\int_{s=\epsilon}^1\frac{1}{|t-s|^{Hd}}\exp\left(-\frac{n\left(\left|y-x\right|^2+\left|z-x\right|^2\right)}{2}\right)dsdtdydz \nonumber
\\
&=cn^d \int_{t\in [\epsilon,1]}\int_{s\in [\epsilon,1]}\frac{1}{|t-s|^{Hd}}dsdt\int_{z\in \IR^d}e^{-\frac{n|x-z|^2}{4}}dz\int_{y\in \IR^d}e^{-\frac{n|y-z|^2}{t}}dy\nonumber =c n^d \cdot n^{-d/2}\cdot n^{-d/2}\le c,\nonumber
\end{align}
where the integration in $t$ and $s$ converges because $Hd<1$.

Similarly, again by Theorem \ref{density bivariate}
\begin{align}\label{103521}
&\,\IE\|\mu_n\|_\gamma^2=2\int_{t=\epsilon}^1\int_{s=\epsilon}^1(2\pi n)^d \exp\left[-\frac{n\left(|X_t-x|^2+|X_s-x|^2\right)}{2}\right]\frac{1}{|t-s|^\gamma}dsdt \nonumber
\\
&\le 2\int_{t=\epsilon}^1dt\int_{s=\epsilon}^1ds\int_{z\in \IR^d}\int_{y\in \IR^d}(2\pi n)^d \exp\left[-\frac{n\left(|x-y|^2+|x-z|^2\right)}{2}\right]\nonumber\\
&\quad\quad\quad\quad\quad\times \frac{1}{|t-s|^\gamma}\frac{1}{|t-s|^{Hd}}\exp\left\{-\lambda\frac{|y-z|^{2\gamma}}{C|t-s|^{2\gamma^2}} \right\}dydz \nonumber
\\
&\le c\int_{t=\epsilon}^1\int_{s=\epsilon}^1\frac{1}{|t-s|^{\gamma+Hd}}dsdt, \nonumber
\end{align}
which converges since $\gamma=1-(1+\delta)Hd$. This proves the third inequality of \eqref{103410}. The proof to the upper bound is thus finished.
\end{proof}

The following theorem addressing the Hausdorff dimension of the level sets of processes driven by fBM is an immediate consequence of the combination of Lemma \ref{124935} and Lemma \ref{124936}.
\begin{theorem}
Let $X=\{X(t), t\in \IR\}$ be a process driven by fBM on $t\in [\epsilon, 1]$.
\\
(i) If $dH >1$, then for every $x\in \IR^d$, $L_x=\varnothing$ a.s.
\\
(ii) If $dH <1$, then for every $x\in \IR^d$, 
\begin{align*}
\dim_{\mathcal{H}}L_x&=\dim_{\mathcal{P}}L_x=1-dH
\end{align*}
holds with positive probability. 
\end{theorem}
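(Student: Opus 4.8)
The plan is to assemble the two preceding lemmas, keeping careful track of which statements hold almost surely and which hold only with positive probability. For part (i), when $dH>1$ the quantity $1-dH$ is strictly negative, so the right-hand side of the bound in Lemma \ref{124935} is negative; the final clause of that lemma then gives $L_x=\varnothing$ almost surely, and there is nothing further to prove.

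For part (ii), assume $dH<1$. First I would establish the upper bounds for both dimensions at once. Lemma \ref{124935} provides $\overline{\dim}_{\mathcal{B}}L_x\le 1-dH$ almost surely, and the chain of inequalities \eqref{dim relation}, namely $\dim_{\mathcal{H}}F\le\dim_{\mathcal{P}}F\le\overline{\dim}_{\mathcal{B}}F$, propagates this bound to both the Hausdorff and packing dimensions. Thus on an event $A$ of full probability we have $\dim_{\mathcal{H}}L_x\le\dim_{\mathcal{P}}L_x\le 1-dH$.

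Next I would bring in the lower bound. Lemma \ref{124936} yields a constant $C>0$ with $\IP\{\dim_{\mathcal{H}}L_x\ge 1-dH\}\ge C$; call this event $B$. On the intersection $A\cap B$, which still has probability at least $C$ since $A$ is almost sure, the Hausdorff dimension is squeezed from both sides and forced to equal $1-dH$. Because $\dim_{\mathcal{H}}L_x\le\dim_{\mathcal{P}}L_x$ always holds by \eqref{dim relation}, the packing dimension inherits the same lower bound on $A\cap B$, and together with the packing upper bound carried by $A$ this gives $\dim_{\mathcal{P}}L_x=1-dH$ there as well. Hence both equalities hold simultaneously with probability at least $C>0$, which is precisely the assertion of (ii).

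The argument is genuinely immediate once the two lemmas are in hand; the only point demanding a moment of care—and the one I would state explicitly—is the logical bookkeeping that the almost-sure upper bound and the positive-probability lower bound must be intersected, so that both equalities are read off on a single event of positive probability rather than asserted independently.
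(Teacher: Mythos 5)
Your argument is correct and is exactly how the paper obtains this theorem: the paper simply states it as an immediate consequence of Lemma \ref{124935} and Lemma \ref{124936} combined with the dimension inequalities \eqref{dim relation}. Your explicit bookkeeping of intersecting the almost-sure upper-bound event with the positive-probability lower-bound event is the right (and only) point of care, and it is handled correctly.
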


\end{document}